\definecolor{ffffff}{rgb}{1.0,1.0,1.0}
\definecolor{qqqqff}{rgb}{0.0,0.0,1.0}
\definecolor{ffqqqq}{rgb}{1.0,0.0,0.0}
\definecolor{zzzzqq}{rgb}{0.6,0.6,0.0}
\definecolor{marronet}{rgb}{0.6,0.2,0}
\definecolor{negre}{rgb}{0,0,0}
\definecolor{vermell}{rgb}{0.8,0.05,0.05}
\definecolor{blau}{rgb}{0.3,0.2,1.}
\definecolor{blauclar}{rgb}{0.,0.,1.}
\definecolor{grisfosc}{rgb}{0.25098039215686274,0.25098039215686274,0.25098039215686274}
\definecolor{verd}{rgb}{0.1,0.6,0.1}
\definecolor{taronja}{rgb}{0.9,0.6,0.05}
\definecolor{vermellclar}{rgb}{1.,0.,0.}
\definecolor{verdet}{rgb}{0,0.8,0.1}
\definecolor{blauverd}{rgb}{0,0.4,0.2}
\definecolor{grisclar}{rgb}{0.6274509803921569,0.6274509803921569,0.6274509803921569}
\newcommand*\circled[1]{\tikz[baseline=(char.base)]{
            \node[shape=circle,draw,inner sep=2pt] (char) {#1};}}
\newcommand*\squared[1]{\tikz[baseline=(char.base)]{
            \node[shape=rectangle,draw,inner sep=2.4pt] (char) {#1}; \node[shape=rectangle,draw,inner sep=1pt] (char) {#1};}}
\newcommand*\squaredGreek[1]{\tikz[baseline=(char.base)]{
            \node[shape=rectangle,draw,inner sep=2.4pt] (char) {$#1$}; \node[shape=rectangle,draw,inner sep=1pt] (char) {$#1$};}}
\newcommand{\R}{{\mathbb R}}       
\newcommand{\N}{{\mathbb N}}       
\newcommand{\diam}{{\rm diam}}
\newcommand{\dist}{{\rm dist}}
\newcommand{\Dist}{{\rm D}}
\newcommand{\Sh}{{\mathbf {Sh}}} 
\newcommand{\SH}{{\mathbf {SH}}} 
\newcommand{\rf}[1]{{(\ref{#1})}}
\newcommand{\norm}[1]{{\left\| {#1} \right\|}}
\newtheorem{theorem}{Theorem}
\newtheorem*{theorem*}{Theorem}
\newtheorem{lemma}[theorem]{Lemma}
\newtheorem{corollary}[theorem]{Corollary}
\newtheorem*{corollary*}{Corollary}
\newtheorem{definition}[theorem]{Definition}
\newtheorem{remark}[theorem]{Remark}
\numberwithin{subsection}{section}
\numberwithin{theorem}{section}
\numberwithin{equation}{section}
\numberwithin{figure}{section}
\title{Measuring Triebel-Lizorkin fractional smoothness on domains in terms of first-order differences}
\author{ Mart\'i Prats
\thanks{MP (De\-par\-ta\-ment de Ma\-te\-m\`a\-ti\-ques, U\-ni\-ver\-si\-tat Au\-t\`o\-no\-ma de Bar\-ce\-lo\-na, Catalonia): \texttt{mprats@mat.uab.cat}.}}
\begin{document}
\maketitle
\bibliographystyle{alpha}

\begin{abstract} 
In this note we give equivalent characterizations for a fractional Triebel-Lizorkin space $F^s_{p,q}(\Omega)$ in terms of first-order differences in a uniform domain $\Omega$. The characterization is valid for any positive, non-integer real smoothness $s\in \mathbb{R}_+\setminus \mathbb{N}$ and {indices $1\leq p<\infty$, $1\leq q \leq \infty$}  as long as the fractional part $\{s\}$ is greater than $d/p-d/q$.
\end{abstract}

\section{Introduction}
Let $d\in\N$, $-\infty<s<\infty$, $0<p<\infty$ and $0<q\leq \infty$. A tempered distribution $f$ is said to belong to the Triebel-Lizorkin space $F^s_{p,q}$ whenever the norm
$$\norm{f}_{F^s_{p,q}}=\norm{\norm{\left\{2^{sj}\left(\psi_j \hat{f}\right)\widecheck{\,}\right\}}_{\ell^q}}_{L^p}$$
is finite, where $\hat{\cdot}$ stands for the Fourier transform, $\widecheck{\cdot}$ stands for its inverse, 
and  ${\psi_j}:={\psi_0}(2^{-j}\cdot )-{\psi_0}(2^{-j+1}\cdot )$ {for $j\geq 1$ and }a give{n} radial function ${\psi_0}\in C^\infty_c(B(0,2))$ with ${\psi_0}|_{B(0,1)}\equiv 1$.  This spaces of functions have been studied for several years, a classical reference being Hans Triebel's book \cite{TriebelTheory}. When $s\in \N$ and $q=2$, then $F^{s}_{p,2}$ coincides with the classical Sobolev space $W^{s,p}$ in the sense of equivalent norms, and for any $s>0$ and $q=2$, it coincides with the corresponding Bessel-potential space.

There are many equivalent characterizations for these spaces. We are interested in characterizations in terms of differences in the spirit of the ones introduced by \cite{Strichartz} in the context of Bessel-potential spaces and $0<s<1$, which are suitable for restriction to domains. In \cite[Section 2.5.10]{TriebelTheory} the reader can find characterizations using differences of order $M>s\in \R$ (see \cite[Section 1.11.9]{TriebelTheoryIII} for characterizations dealing with a larger range of admissible indices $s,p,q$ using means in balls). Roughly speaking, one needs to take into account $M+1$ collinear points with constant gap between them. When restricting to a domain, this poses several technical difficulties that can make computations awkward in some contexts. 

However, sometimes it is easier to deal with weak derivatives to avoid using higher order differences. Indeed, combining the lifting property of these spaces, which says that $\norm{f}_{F^s_{p,q}}\approx \norm{f}_{F^{s-1}_{p,q}}+\norm{\nabla f}_{F^{s-1}_{p,q}}$, with some elementary embeddings in \cite[Section 2.3.2]{TriebelTheory} one obtains that whenever $s=k+\sigma$ with $k\in\N$ and $0<\sigma<1$, then
$$\norm{f}_{F^s_{p,q}} \approx \norm{f}_{L^p} + \norm{\nabla^k f}_{F^{\sigma}_{p,q}},$$
where $\nabla^k f$ denotes the vector valued function containing all the weak derivatives of order $k$ as components.
Thus, one can apply only at the last norm the characterization using first order differences, which follows from \cite[Theorem 1.2]{PratsSaksman}.
\begin{corollary}[to {\cite[Theorem 1.2]{PratsSaksman}}]\label{coroPS}
Let $k\geq 0$ and $d\geq 1$ be  natural numbers, let $0<\sigma<1$, let $1\leq p<\infty$, $1\leq q \leq \infty$ with $\sigma >\frac dp-\frac dq$, and call $s:=k+\sigma$. There are constants depending on these parameters such that
$$\norm{f}_{F^s_{p,q}}\approx \norm{f}_{W^{k,p}}+ \left(\int_{\R^d} \left( \int_{\R^d} \frac{|\nabla^kf(x)-\nabla^kf(y)|^q}{|x-y|^{\sigma q+d}}\, dy\right)^\frac pq  dx\right)^\frac1p,$$
for every $f\in W^{k,p}(\R^d)$, with the usual modification whenever $q=\infty$. 
\end{corollary}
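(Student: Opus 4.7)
The plan is to follow exactly the route sketched in the introduction: reduce the $F^s_{p,q}$ norm of $f$ to an $F^\sigma_{p,q}$ norm of the top-order derivative $\nabla^k f$, and then invoke \cite[Theorem 1.2]{PratsSaksman} on that derivative. The starting point I would use is the equivalence
$$
\|f\|_{F^s_{p,q}(\R^d)} \approx \|f\|_{L^p(\R^d)} + \|\nabla^k f\|_{F^\sigma_{p,q}(\R^d)},
$$
obtained by iterating the lifting property $\|g\|_{F^{t}_{p,q}}\approx \|g\|_{F^{t-1}_{p,q}}+\|\nabla g\|_{F^{t-1}_{p,q}}$ from $t=s$ down to $t=\sigma$, and then collapsing the intermediate $F^\sigma_{p,q}$ norms of $\nabla^j f$ with $1\leq j\leq k-1$ via the elementary embeddings of \cite[Section 2.3.2]{TriebelTheory}.

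Next I would apply \cite[Theorem 1.2]{PratsSaksman}, taking $\R^d$ as the (trivially uniform) underlying domain, to each component of $\nabla^k f$. The hypothesis $\sigma > d/p - d/q$ in the corollary is exactly the one the cited theorem requires for the first-order-difference characterization of $F^\sigma_{p,q}$. Summing over components gives
$$
\|\nabla^k f\|_{F^\sigma_{p,q}} \approx \|\nabla^k f\|_{L^p} + \left(\int_{\R^d} \left(\int_{\R^d} \frac{|\nabla^k f(x)-\nabla^k f(y)|^q}{|x-y|^{\sigma q + d}}\,dy\right)^{p/q}\,dx\right)^{1/p}.
$$

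Combining the two displays produces $\|f\|_{F^s_{p,q}}\approx \|f\|_{L^p}+\|\nabla^k f\|_{L^p}+I(f)$, where $I(f)$ denotes the double integral. The last step is to upgrade the two $L^p$ summands on the right to the full $\|f\|_{W^{k,p}}$: the direction $\lesssim$ is immediate, while the reverse follows from the classical Gagliardo--Nirenberg interpolation, which bounds every intermediate $\|\nabla^j f\|_{L^p}$ for $1\leq j\leq k-1$ by $\|f\|_{L^p}+\|\nabla^k f\|_{L^p}$.

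The main obstacle I foresee is the first step: making iterated lifting plus embeddings genuinely collapse to a single $F^\sigma_{p,q}$ norm of $\nabla^k f$. The lifting on its own yields the whole chain $\sum_{j=0}^{k}\|\nabla^j f\|_{F^\sigma_{p,q}}$, so what is needed in addition is an interpolation-type argument within the Triebel-Lizorkin scale to control the intermediate orders by the two endpoints. This is a classical fact of the theory, so in practice it reduces to quoting the appropriate section of \cite{TriebelTheory}, but it is the one place in the argument where genuine care is required.
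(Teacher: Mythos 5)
Your route is genuinely different from the paper's. The paper's proof simply invokes \cite[Theorem 1.2]{PratsSaksman} for general $s = k+\sigma$: that theorem already yields
$$
\norm{f}_{F^s_{p,q}}\approx \norm{f}_{W^{k,\max\{p,q\}}}+ \left(\int_{\R^d} \left( \int_{\R^d} \frac{|\nabla^kf(x)-\nabla^kf(y)|^q}{|x-y|^{\sigma q+d}}\, dy\right)^\frac pq  dx\right)^\frac1p,
$$
and the only remaining work is to upgrade the exponent $\max\{p,q\}$ back to $p$, which the paper does by citing estimate $(3.8)$ of that same reference: for $q\geq p$,
$$
\norm{g}_{L^q(\R^d)}\lesssim \norm{g}_{L^p(\R^d)} + \left(\int_{\R^d} \left( \int_{\R^d} \frac{|g(x)-g(y)|^q}{|x-y|^{\sigma q+d}}\, dy\right)^\frac pq  dx\right)^\frac1p.
$$
Your proposal instead reconstructs the lifting route sketched in the Introduction; that is a valid alternative line of reasoning, and the reduction $\norm{f}_{F^s_{p,q}} \approx \norm{f}_{L^p} + \norm{\nabla^k f}_{F^\sigma_{p,q}}$, followed by the Gagliardo--Nirenberg absorption of the intermediate derivatives on $\R^d$, is sound.

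However, there is a genuine gap at the step where you apply \cite[Theorem 1.2]{PratsSaksman} to $\nabla^k f$. You assert that the theorem gives
$$
\|\nabla^k f\|_{F^\sigma_{p,q}} \approx \|\nabla^k f\|_{L^p} + \left(\int_{\R^d} \left(\int_{\R^d} \frac{|\nabla^k f(x)-\nabla^k f(y)|^q}{|x-y|^{\sigma q + d}}\,dy\right)^{p/q}\,dx\right)^{1/p},
$$
but in fact that theorem places the stronger norm $\|\nabla^k f\|_{L^{\max\{p,q\}}}$ on the right-hand side, not $\|\nabla^k f\|_{L^p}$. When $q\leq p$ these coincide and your proof closes, but when $q>p$ you have not shown that the $L^q$ norm of $\nabla^k f$ is controlled by $\|\nabla^k f\|_{L^p}$ together with the first-difference seminorm. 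This is not automatic: it is exactly the content of estimate $(3.8)$ in \cite{PratsSaksman} (and it is the only place where the hypothesis $\sigma>\frac dp-\frac dq$ is really earning its keep in the $q>p$ regime). Your proposal therefore needs to import that estimate explicitly before the equivalence you claim is valid; as written, the argument would be incomplete precisely in the regime $q>p$ that the corollary is designed to cover.
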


In this note we study analogous norms for these spaces in terms of differences on uniform domains.  Let $d\geq 1$ be a natural number, let $0<\sigma<1$, and let {$1\leq p <\infty$, $1\leq q \leq \infty$}. Given a domain $\Omega\subset \R^d$ and $f\in L^1_{loc}(\Omega)$, write
$$\norm{f}_{\dot A^\sigma_{p,q}(\Omega)}:= \left(\int_\Omega \left( \int_\Omega \frac{|f(x)-f(y)|^q}{|x-y|^{\sigma q+d}}\, dy\right)^\frac pq  dx\right)^\frac1p.$$
In a recent paper, Eero Saksman and the author of the present article showed that 
$$\norm{f}_{A^\sigma_{p,q}(\Omega)}:=\norm{f}_{L^p(\Omega)} + \norm{ f}_{\dot A^\sigma_{p,q}(\Omega)}.$$
is an equivalent norm for the space $F^{\sigma}_{p,q}(\Omega)$ whenever $\Omega$ is a uniform domain, $\sigma >\frac dp-\frac dq$  {(in Theorem \ref{theoEndpoint} in the appendix we discuss the endpoint cases and the unbounded domains)}. This characterization allowed them to show a $T(1)$-type theorem for the boundedness of convolution Calder\'on-Zygmund operators in $F^\sigma_{p,q}(\Omega)$ {when $1<p,q<\infty$}.  

In this note we show that there are equivalent characterizations for all the positive non-integer orders of smoothness:
\begin{theorem}\label{theoEquivalent}
Let $k\geq 0$ and $d\geq 1$ be  natural numbers, let $0<\sigma<1$, let {$1\leq p <\infty$, let $1\leq q \leq \infty$} with $\sigma >\frac dp-\frac dq$, and call $s:=k+\sigma$. Given a uniform domain $\Omega\subset \R^d$ and $f\in L^1_{loc}(\Omega)$, the norms
$$\norm{f}_{A^s_{p,q}(\Omega)}:=\norm{f}_{W^{k,p}(\Omega)} + \sum_{|\alpha|=k}\norm{D^\alpha f}_{\dot A^\sigma_{p,q}(\Omega)} ,$$
where $\alpha$ takes values in $\N^d$ with $|\alpha|:=\sum\alpha_j=k$, and
$$\norm{f}_{F^s_{p,q}(\Omega)}:= \inf \left\{ \norm{g}_{F^s_{p,q}(\R^d)} : g\in F^s_{p,q}(\R^d)  \mbox{ with } g|_\Omega \equiv f\right\}$$
are equivalent for the Triebel-Lizorkin space $F^s_{p,q}(\Omega)$, with constants depending on $s,p,q,d$ and the uniformity constants of $\Omega$.
\end{theorem}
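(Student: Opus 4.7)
The theorem is established by proving the two inequalities making up the equivalence.

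The direction $\|f\|_{A^s_{p,q}(\Omega)} \lesssim \|f\|_{F^s_{p,q}(\Omega)}$ is straightforward. Pick any extension $g\in F^s_{p,q}(\R^d)$ with $g|_\Omega \equiv f$. Corollary \ref{coroPS} applied on the whole space yields
\[
\|g\|_{W^{k,p}(\R^d)} + \sum_{|\alpha|=k}\|D^\alpha g\|_{\dot A^\sigma_{p,q}(\R^d)} \lesssim \|g\|_{F^s_{p,q}(\R^d)}.
\]
Since both integrations defining $\|\cdot\|_{\dot A^\sigma_{p,q}}$ have nonnegative integrands, replacing $\R^d$ by $\Omega$ only reduces the norm, and $(D^\alpha g)|_\Omega=D^\alpha f$ as weak derivatives, so $\|f\|_{A^s_{p,q}(\Omega)}\lesssim \|g\|_{F^s_{p,q}(\R^d)}$. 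Taking the infimum over $g$ closes the bound.

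For the reverse direction $\|f\|_{F^s_{p,q}(\Omega)} \lesssim \|f\|_{A^s_{p,q}(\Omega)}$, the first step is to apply the Saksman-Prats characterization \cite{PratsSaksman} (the $k=0$ case recalled in the introduction) to each top-order derivative, giving
\[
\|D^\alpha f\|_{F^\sigma_{p,q}(\Omega)} \approx \|D^\alpha f\|_{L^p(\Omega)} + \|D^\alpha f\|_{\dot A^\sigma_{p,q}(\Omega)} \qquad (|\alpha|=k).
\]
Combined with the trivial control $\|D^\alpha f\|_{L^p(\Omega)}\le \|f\|_{W^{k,p}(\Omega)}$, this reduces matters to proving the lifting-type inequality on the domain,
\[
\|f\|_{F^s_{p,q}(\Omega)} \lesssim \|f\|_{W^{k,p}(\Omega)} + \sum_{|\alpha|=k}\|D^\alpha f\|_{F^\sigma_{p,q}(\Omega)}.
\]
I would prove the latter by constructing an extension $Ef\in F^s_{p,q}(\R^d)$ explicitly via a Whitney-type scheme adapted to the uniform domain: decompose $\R^d\setminus\overline{\Omega}$ into Whitney cubes $\{Q_j\}$, pair each $Q_j$ with a reflected region $Q_j^*\subset\Omega$ of comparable size and distance (possible thanks to the uniformity of $\Omega$), and set $Ef$ on $Q_j$ to be an order-$k$ Taylor-polynomial reproduction of $f$ extracted from $Q_j^*$, glued together with a smooth partition of unity. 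Applying Corollary \ref{coroPS} on $\R^d$ to $Ef$ then decomposes
\[
\|Ef\|_{F^s_{p,q}(\R^d)} \approx \|Ef\|_{W^{k,p}(\R^d)} + \sum_{|\alpha|=k}\|D^\alpha(Ef)\|_{\dot A^\sigma_{p,q}(\R^d)},
\]
and the first summand is bounded by $\|f\|_{W^{k,p}(\Omega)}$ via classical Jones-type estimates for this reflection extension.

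The main technical obstacle is bounding the second summand. Splitting the defining double integral over $\R^d\times\R^d$ into the pieces $\Omega\times\Omega$, $\Omega\times\Omega^c$ (and its symmetric counterpart), and $\Omega^c\times\Omega^c$, the piece over $\Omega\times\Omega$ coincides with $\|D^\alpha f\|_{\dot A^\sigma_{p,q}(\Omega)}$. The remaining three pieces require a careful geometric analysis of the Whitney/reflection pairing: the differences $|D^\alpha(Ef)(x)-D^\alpha(Ef)(y)|$ with $x,y\in\Omega^c$ have to be estimated by differences of $D^\alpha f$ on the corresponding reflected cubes inside $\Omega$, plus remainder terms arising from the derivatives of the partition of unity hitting lower-order derivatives of $f$. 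This is where the uniformity constants of $\Omega$ and the hypothesis $\sigma > d/p-d/q$ enter most essentially, and where the extension/averaging techniques from \cite{PratsSaksman} must be adapted to control differences of the top-order weak derivatives rather than of the function itself; I expect this bookkeeping to form the bulk of the proof.
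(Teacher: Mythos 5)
Your plan matches the paper's proof in its essential structure: both directions ultimately come down to Corollary \ref{coroPS} (identifying $F^s_{p,q}(\R^d)$ with the difference norm $A^s_{p,q}(\R^d)$) together with the boundedness of a Jones-type reflection extension operator $\Lambda_k\colon A^s_{p,q}(\Omega)\to A^s_{p,q}(\R^d)$, which the paper isolates as Theorem \ref{theoExtension} and which, as you anticipate, absorbs all the real technical work (Whitney covering of $\overline\Omega^{\,c}$, symmetrized cubes, Meyers polynomials, partition of unity, the split of the double integral into $\Omega\times\Omega$, $\Omega\times\Omega^c$, $\Omega^c\times\Omega$, $\Omega^c\times\Omega^c$, and the chain/shadow machinery that handles the index ranges). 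One structural remark: the intermediate step you insert in the hard direction --- applying the $k=0$ characterization on $\Omega$ to replace $\norm{D^\alpha f}_{\dot A^\sigma_{p,q}(\Omega)}$ by $\norm{D^\alpha f}_{F^\sigma_{p,q}(\Omega)}$ and then reformulating the goal as a lifting inequality on the domain --- is a detour you do not need. Once $\norm{\Lambda_k f}_{A^s_{p,q}(\R^d)}\lesssim \norm{f}_{A^s_{p,q}(\Omega)}$ is established, applying Corollary \ref{coroPS} to $\Lambda_k f$ on $\R^d$ already yields
\[
\norm{f}_{F^s_{p,q}(\Omega)}\leq \norm{\Lambda_k f}_{F^s_{p,q}(\R^d)}\approx \norm{\Lambda_k f}_{A^s_{p,q}(\R^d)}\lesssim \norm{f}_{A^s_{p,q}(\Omega)},
\]
so the Triebel--Lizorkin norm on $\Omega$ never needs to appear as an intermediate object; this is exactly the two-line argument the paper gives in Section \ref{secMain}. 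Your sketch of the extension bound is a faithful outline of the paper's Section \ref{secExtension}, though of course the proposal leaves that bookkeeping unexecuted.
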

This norm has a self-improvement property from \cite[Theorems 1.5 and 1.6]{PratsSaksman} {(in Theorems \ref{theoShadow} and \ref{theoBall} we discuss the endpoint cases and the unbounded domains)}. Call $\delta(x)=\dist(x,\partial\Omega)$. Consider the Carleson boxes (or shadows) $\mathbf{Sh}(x):=\{y\in\Omega : |y-x|\leq c_{\Omega}\delta(x)\}$ for a certain constant $c_\Omega>1$ big enough. Then we have the following reduction for the Triebel-Lizorkin norm:
\begin{corollary}\label{coroNormOmega}
Let $\Omega\subset \R^d$ be a {uniform} domain, let $0<\sigma< 1$, $k\in\N$, {$1\leq p<\infty$ and $1\leq q \leq \infty$}  with $\sigma>\frac{d}{p}-\frac{d}{q}$. 
Then $f\in F^s_{p,q}(\Omega)$ for $s=k+\sigma$ if and only if 
\begin{equation*}
\norm{f}_{W^{k,p}(\Omega)} + \sum_{|\alpha|=k} \left(\int_\Omega \left(\int_{\Sh(x)}\frac{|D^\alpha f(x)-D^\alpha f(y)|^q}{|x-y|^{\sigma q+d}} \,dy\right)^{\frac{p}{q}}dx\right)^{\frac{1}{p}}<\infty.
\end{equation*}
and the norms are equivalent.
\end{corollary}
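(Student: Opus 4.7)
The plan is to derive Corollary \ref{coroNormOmega} by combining Theorem \ref{theoEquivalent} with the self-improvement property referenced from \cite[Theorems 1.5 and 1.6]{PratsSaksman}. The $A^s_{p,q}(\Omega)$ norm of Theorem \ref{theoEquivalent} controls the $F^s_{p,q}(\Omega)$ norm using the double integral over $\Omega\times\Omega$; our task is to reduce the inner integral to the much smaller set $\Sh(x)$, for which the self-improvement results were designed.

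One inequality is essentially free. Since $\Sh(x)\subset \Omega$ for every $x\in\Omega$, monotonicity of the inner integral yields, for each multi-index $|\alpha|=k$,
$$\left(\int_\Omega \left(\int_{\Sh(x)}\frac{|D^\alpha f(x)-D^\alpha f(y)|^q}{|x-y|^{\sigma q+d}}\,dy\right)^{\frac{p}{q}}dx\right)^{\frac{1}{p}}\leq \norm{D^\alpha f}_{\dot A^\sigma_{p,q}(\Omega)}.$$
Summing over $|\alpha|=k$, adding $\norm{f}_{W^{k,p}(\Omega)}$, and invoking Theorem \ref{theoEquivalent} shows the shadow quantity is bounded above by $\norm{f}_{F^s_{p,q}(\Omega)}$ up to a constant.

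For the reverse direction, I would apply the self-improvement theorems of Prats--Saksman to each of the $\binom{k+d-1}{d-1}$ components $D^\alpha f$ with $|\alpha|=k$. Since $\sigma\in(0,1)$, $1\leq p<\infty$, $1\leq q\leq\infty$, and $\sigma>d/p-d/q$, each $D^\alpha f$ fits into the hypothesis of \cite[Theorems 1.5 and 1.6]{PratsSaksman} (modulo reading off the correct endpoint version from Theorems \ref{theoShadow} and \ref{theoBall} in the appendix), so that
$$\norm{D^\alpha f}_{\dot A^\sigma_{p,q}(\Omega)} \lesssim \norm{D^\alpha f}_{L^p(\Omega)} + \left(\int_\Omega \left(\int_{\Sh(x)}\frac{|D^\alpha f(x)-D^\alpha f(y)|^q}{|x-y|^{\sigma q+d}}\,dy\right)^{\frac{p}{q}}dx\right)^{\frac{1}{p}}.$$
Adding $\norm{f}_{W^{k,p}(\Omega)}$ and summing over $|\alpha|=k$, the norm $\norm{f}_{A^s_{p,q}(\Omega)}$ is controlled by the shadow quantity, and one more application of Theorem \ref{theoEquivalent} converts this into the desired upper bound for $\norm{f}_{F^s_{p,q}(\Omega)}$.

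The proof is therefore a bookkeeping exercise once the pieces are identified; the only subtlety is checking that the self-improvement result applies componentwise to $D^\alpha f$ for $|\alpha|=k$. This is fine because the hypotheses on $\sigma,p,q$ and on $\Omega$ (uniformity) match precisely those of the base-case theorem, and $D^\alpha f\in L^p(\Omega)$ is guaranteed by membership in $W^{k,p}(\Omega)$. No higher-order differences enter, so the constants depend only on $s,p,q,d$ and the uniformity constants of $\Omega$, as claimed.
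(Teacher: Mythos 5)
Your proposal is correct and matches the paper's intended argument: the corollary is stated as an immediate consequence of Theorem \ref{theoEquivalent} combined with the self-improvement result (Theorem \ref{theoShadow} in the appendix, which generalizes \cite[Theorems 1.5, 1.6]{PratsSaksman} to the endpoint $q=\infty$ and unbounded $\Omega$), applied componentwise to $D^\alpha f$ for $|\alpha|=k$. The only cosmetic point left implicit in both your write-up and the paper is the routine comparison between the pointwise shadow $\Sh(x)$ in the corollary's statement and the Whitney-cube shadow $Sh(Q)$ used in Theorem \ref{theoShadow}, which holds since $\delta(x)\approx\ell(Q)$ for $x\in Q$.
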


The self-improvement is stronger when $p\geq q$, when we can restrict to Whitney balls:
\begin{corollary}\label{coroNormOmegapgtrq}
Let $\Omega\subset \R^d$ be a {uniform} domain, let  $0<\sigma< 1$, $k\in\N$, {$1\leq q\leq p <\infty$} and $0<\rho<1$. 
Then $f\in F^s_{p,q}(\Omega)$ for $s=k+\sigma$ if and only if 
\begin{equation*}
\norm{f}_{W^{k,p}(\Omega)} + \sum_{|\alpha|=k} \left(\int_\Omega \left(\int_{B\left(x,\rho \delta(x)\right)}\frac{|D^\alpha f(x)-D^\alpha f(y)|^q}{|x-y|^{\sigma q+d}} \,dy\right)^{\frac{p}{q}}dx\right)^{\frac{1}{p}}<\infty.
\end{equation*}
and the norms are equivalent.
\end{corollary}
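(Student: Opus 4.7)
The strategy is to combine Theorem \ref{theoEquivalent} with the self-improvement statement \cite[Theorem 1.6]{PratsSaksman}, which is the $p\ge q$ analogue of the Carleson-shadow result that underpins Corollary \ref{coroNormOmega}.

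By Theorem \ref{theoEquivalent}, the Triebel-Lizorkin norm $\norm{f}_{F^s_{p,q}(\Omega)}$ is comparable to
$$\norm{f}_{W^{k,p}(\Omega)}+\sum_{|\alpha|=k}\norm{D^\alpha f}_{\dot A^\sigma_{p,q}(\Omega)},$$
so the task reduces, for each multi-index $\alpha$ with $|\alpha|=k$, to comparing $\norm{g}_{\dot A^\sigma_{p,q}(\Omega)}$ with its Whitney-ball truncated variant
$$\left(\int_\Omega \left(\int_{B(x,\rho\delta(x))} \frac{|g(x)-g(y)|^q}{|x-y|^{\sigma q+d}}\,dy\right)^{p/q} dx\right)^{1/p}$$
modulo $\norm{g}_{L^p(\Omega)}$, where $g=D^\alpha f$. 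Note that the hypothesis $\sigma>d/p-d/q$ required by Theorem \ref{theoEquivalent} is automatic here, since $q\le p$ yields $d/p-d/q\le 0<\sigma$.

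The easy direction is immediate from the inclusion $B(x,\rho\delta(x))\subset \Omega$ and monotonicity of the inner integral in $y$. For the converse, I apply \cite[Theorem 1.6]{PratsSaksman} to each $g=D^\alpha f\in L^p(\Omega)$; this is exactly the self-improvement statement referenced just before the corollary, and in the range $1\le q\le p<\infty$ it bounds the full $\dot A^\sigma_{p,q}(\Omega)$ seminorm by the Whitney-ball truncated version plus the $L^p$ norm.

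The main obstacle is buried in the cited theorem: to treat the contribution of pairs $(x,y)$ with $y\in\Omega\setminus B(x,\rho\delta(x))$, one has to join $x$ and $y$ by an admissible chain of Whitney cubes (available thanks to the uniformity of $\Omega$), telescope $g(x)-g(y)$ through averages on consecutive cubes in the chain, and recombine the resulting contributions via Minkowski's inequality with exponent $p/q\ge 1$. It is precisely this last step that requires $p\ge q$, and it is the reason why the passage from the Carleson shadow $\Sh(x)$ in Corollary \ref{coroNormOmega} to the much smaller Whitney ball $B(x,\rho\delta(x))$ is unavailable in the regime $q>p$.
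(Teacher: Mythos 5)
Your proposal follows essentially the same route as the paper: Theorem \ref{theoEquivalent} converts the problem into a comparison of $\dot A^\sigma_{p,q}(\Omega)$ seminorms of $g=D^\alpha f$, and then one invokes the Whitney-ball self-improvement property. You also correctly observe that $\sigma>d/p-d/q$ is automatic when $q\le p$, and that the easy direction is just monotonicity.

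There is one small but genuine gap. You cite \cite[Theorem 1.6]{PratsSaksman} as the self-improvement statement, but that theorem (like Theorem 1.4 of the same paper) is proven only for $1<p,q<\infty$ and $\Omega$ bounded. The corollary you are proving asserts the range $1\le q\le p<\infty$ with no boundedness assumption, so invoking the published version alone does not cover the endpoint $q=1$ or unbounded domains. The paper handles this precisely via Theorem \ref{theoBall} in the appendix, which extends the self-improvement to those cases (the proof observes that the $L^p$ restriction and boundedness of $\Omega$ are never actually used in the underlying \cite[Lemma 4.1, Remark 4.2]{PratsSaksman} argument and handles the dual exponent $q'=\infty$ by the usual modifications). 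You should cite that extended version rather than \cite[Theorem 1.6]{PratsSaksman} directly. A further cosmetic point: Theorem \ref{theoBall} is phrased with the inner integral over $5Q$ rather than over $B(x,\rho\delta(x))$; the two are interchangeable up to constants since $\delta(x)\approx\ell(Q)$ for $x\in Q$, but this translation deserves a sentence.

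Your final paragraph explaining \emph{why} $p\ge q$ is needed (chaining plus Minkowski at exponent $p/q\ge1$) is accurate as an intuition, though of course it describes what happens inside the cited lemma rather than a step you supply yourself.
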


The last result has {\cite[Proposition 5]{Dyda}} as a particular case ($\Omega$ a Lipschitz domain, $p=q$, $k=0$). 

In a forthcoming paper these norms will be used to study the relation between the Triebel-Lizorkin regularity of quasiconformal mappings between domains, the regularity of their Beltrami coefficient and the regularity of the boundary of the domains{.}

The crucial estimate to show Theorem \ref{theoEquivalent} is to find an extension operator for the norm $A^s_{p,q}$.  That is, we need to find a  bounded linear operator $\Lambda: A^s_{p,q}(\Omega)\to A^s_{p,q}(\R^d)$ such that $(\Lambda f)|_\Omega=f$. Once this is settled, the theorem follows by classical estimates. Here we will recover the extension operators defined by Peter Jones in \cite{Jones} (the reader will note that we write $\Lambda_k$ where Peter Jones wrote $\Lambda_{k+1}$).
\begin{theorem}\label{theoExtension}
Let $\Omega$ be a uniform domain and $k\in \N$. There exists a linear operator $\Lambda_k: W^{k+1,\infty}(\Omega) \to W^{k+1,\infty}(\R^d)$ such that for every 
$0<\sigma< 1$, {$1\leq p<\infty$ and $1\leq q \leq \infty$}  with $\sigma>\frac{d}{p}-\frac{d}{q}$, then 
$$\Lambda_k:A^s_{p,q}(\Omega)\to A^s_{p,q} (\R^d)$$
(with $s=\sigma+k$) is a bounded {extension} operator.
\end{theorem}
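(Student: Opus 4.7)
The plan is to adopt Peter Jones' construction literally. Fix Whitney decompositions $\W_1$ of $\Omega$ and $\W_2$ of $\R^d\setminus\overline\Omega$; using the uniformity of $\Omega$, for each \emph{small} cube $Q\in \W_2$ I choose a reflected cube $Q^*\in\W_1$ with $\ell(Q^*)\approx \dist(Q,Q^*)\approx \ell(Q)$. Let $\{\varphi_Q\}_Q$ be a smooth partition of unity subordinate to $\W_2$ (truncated to the small cubes), and for each such $Q$ let $P_{Q^*}f$ denote the polynomial of degree at most $k$ of best $L^2$-approximation of $f$ on $Q^*$. Setting
$$\Lambda_k f(x)=\sum_Q \varphi_Q(x)\, P_{Q^*}f(x)\quad (x\in\R^d\setminus\overline\Omega),\qquad \Lambda_k f|_\Omega=f,$$
linearity and the extension property are built in, and Jones' classical work shows that $\Lambda_k$ is bounded $W^{k+1,\infty}(\Omega)\to W^{k+1,\infty}(\R^d)$ and $W^{k,p}(\Omega)\to W^{k,p}(\R^d)$, so the $W^{k,p}$ part of the $A^s_{p,q}$-norm is under control immediately.

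I then need to bound, for each multi-index $\alpha$ with $|\alpha|=k$, the seminorm $\norm{D^\alpha\Lambda_k f}_{\dot A^\sigma_{p,q}(\R^d)}$ by $\norm{f}_{A^s_{p,q}(\Omega)}$. I will split the defining double integral into three contributions according to whether $(x,y)$ lies in $\Omega\times\Omega$, in $\Omega\times(\R^d\setminus\Omega)$ (together with its symmetric), or in $(\R^d\setminus\Omega)\times(\R^d\setminus\Omega)$. The first is literally $\norm{D^\alpha f}_{\dot A^\sigma_{p,q}(\Omega)}^p$, so it only remains to control the other two.

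For these pieces my core tool will be a pointwise estimate on $D^\alpha \Lambda_k f$ at points outside $\Omega$. Using that $\sum_Q \varphi_Q\equiv 1$ locally in $\R^d\setminus\overline\Omega$ and that $|D^\beta \varphi_Q|\lesssim \ell(Q)^{-|\beta|}$, for $x\in Q\in\W_2$ I can write
$$|D^\alpha \Lambda_k f(x) - D^\alpha P_{Q^*}f(x)|\lesssim \sum_{Q'\sim Q}\ell(Q)^{-|\alpha|}\norm{P_{Q^*}f-P_{Q'^*}f}_{L^\infty(Q)},$$
and standard polynomial approximation estimates bound the right-hand side by $\ell(Q)^{k-|\alpha|}$ times the mean oscillation of $\nabla^k f$ over $Q^*\cup Q'^*$. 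For $x,y\in\R^d\setminus\overline\Omega$ lying in cubes $Q_x,Q_y$, the uniformity of $\Omega$ provides a chain of reflected cubes of geometrically controlled length connecting $Q_x^*$ with $Q_y^*$ inside $\Omega$; telescoping along this chain yields
$$|D^\alpha\Lambda_k f(x)-D^\alpha\Lambda_k f(y)|\lesssim \sum_j \frac{1}{|R_j|^2}\int_{R_j}\int_{R_j}|\nabla^k f(u)-\nabla^k f(v)|\,du\,dv,$$
where each $R_j$ sits inside a ball of radius $\lesssim |x-y|$ contained in $\Omega$. The mixed case $x\in\Omega$, $y\in\R^d\setminus\overline\Omega$ is handled analogously, with $x$ playing the role of one endpoint of the chain.

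Plugging these pointwise bounds into the $\dot A^\sigma_{p,q}$ double integral and applying Fubini, the proof will reduce to a vector-valued maximal estimate: the resulting object is, morally, an $\ell^q(L^p)$-norm of local averages of $|\nabla^k f(u)-\nabla^k f(v)|\,|x-y|^{-\sigma-d/q}$ over the cubes $R_j$ around $x$ and $y$, and I want to dominate it by $\norm{\nabla^k f}_{\dot A^\sigma_{p,q}(\Omega)}$. This last step is where I expect the main difficulty, and also where the assumption $\sigma>d/p-d/q$ must enter, precisely as the sharp threshold for the $\ell^q$-valued Hardy--Littlewood maximal inequality in the $L^p$ scale (or, equivalently, the corresponding fractional integral bound). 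The uniformity of $\Omega$ is used twice here: to produce the Whitney chains with geometrically decaying length, and to guarantee that each chain stays inside a Euclidean ball of radius comparable to $|x-y|$, so that the maximal estimate indeed closes the loop back to the $\dot A^\sigma_{p,q}(\Omega)$-seminorm.
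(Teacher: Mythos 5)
Your overall architecture — Jones' reflected-polynomial extension, splitting $\|D^\alpha\Lambda_kf\|_{\dot A^\sigma_{p,q}(\R^d)}$ into the $\Omega\times\Omega$, mixed, and $\Omega^c\times\Omega^c$ pieces, and telescoping along uniform-domain chains — is the same as the paper's, and the first two paragraphs are a fair sketch. But there are two genuine gaps, and your diagnosis of where $\sigma>\frac dp-\frac dq$ enters is mistaken.

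\textbf{Choice of polynomial, and the missing commutation.} You use the $L^2$-best polynomial on $Q^*$; the paper instead uses the Meyers projection $\mathbf P^k_{Q^*}f$, defined by matching \emph{all} averages $\int D^\beta\mathbf P^k_Qf=\int D^\beta f$ for $|\beta|\le k$. This choice is not cosmetic: it gives $D^\beta \mathbf P^k_Qf=\mathbf P^{k-|\beta|}_QD^\beta f$, which is used repeatedly (e.g.\ in \rf{eqPolyBound}) to convert bounds on derivatives of the approximating polynomial into Sobolev norms of $f$. With the $L^2$ projector this commutation fails, and your ``standard polynomial approximation estimates'' would have to be redeveloped. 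Moreover, before you can even write the $\dot A^\sigma_{p,q}$ seminorm of $D^\alpha\Lambda_kf$, you need to prove that $D^\alpha\Lambda_kf$ exists as an $L^p$ function and equals the formal derivative $\chi_\Omega D^\alpha f+\sum_Q D^\alpha(\varphi_QP_{Q^*}f)$ when $p<\infty$; Jones' $W^{k+1,\infty}$ result alone does not give this. The paper settles this by an induction on $k$ via $\mathring\Lambda^k=\Lambda^k-\Lambda^{k-1}$ and an absolute-convergence argument; your proposal silently assumes it.

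\textbf{The final reduction is \emph{not} a Fefferman--Stein maximal inequality.} You conjecture that the closing step is an $\ell^q$-valued Hardy--Littlewood maximal estimate and that $\sigma>\frac dp-\frac dq$ is ``the sharp threshold'' for it. This is wrong: the Fefferman--Stein inequality $\|(Mf_j)\|_{L^p(\ell^q)}\lesssim\|(f_j)\|_{L^p(\ell^q)}$ holds for all $1<p<\infty$, $1<q\le\infty$ without any relation between the exponents. What the condition $\sigma>\frac dp-\frac dq$ actually enters is the convergence of cube sums such as
$$\sum_{Q}\frac{\ell(Q)^d}{\Dist(Q,L)^{\sigma p+\frac{dp}{q}}}\lesssim \ell(L)^{d-\sigma p-\frac{dp}{q}},$$
which requires $\sigma p+\frac{dp}{q}>d$. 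This is how the threshold is used in Lemma~\ref{lemControlTotal} when $p\le q$ (via subadditivity of $t\mapsto t^{p/q}$). When $p>q$ there is no such subadditivity and the argument is substantially harder: the paper resorts to the duality Lemma~\ref{lemNormA}, which rewrites the $\ell^{p/q}$ norm against a dual function $g\in L^{p'}(L^{q'})$, reorganizes the sum along shadows $\SH(R)$ of central cubes of admissible chains, and uses a three-term H\"older estimate plus \rf{eqMaximalGuay} and \rf{eqMaximalAllOver} to land on $\|MG\|_{L^{p'}}\|M(|h|^{\tilde p})\|_{L^{p/\tilde p}}^{1/\tilde p}$. This duality step is the genuinely nontrivial part of the proof, and your sketch gives no indication of how you would handle it; the vector-valued maximal inequality you invoke does not do the job.

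A smaller, structural remark: the paper first extracts $D^\alpha\Lambda_kf=\Lambda_0(D^\alpha f)+\sum_{\beta<\alpha}\binom\alpha\beta\sum_Q D^{\alpha-\beta}\psi_Q D^\beta P^k_{Q^*}f$, recycles the $k=0$ extension theorem for the first term, and only estimates the ``error'' terms (which vanish on $\Omega$) over the three $(x,y)$ regions. Your direct splitting of $D^\alpha\Lambda_kf$ without this reduction is not wrong, but it re-proves the $k=0$ case from scratch and lets the top-order difference $D^\alpha f(x)-D^\alpha f(y)$ contaminate the mixed and exterior pieces, which is avoidable.
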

Of course, the reader may find in the literature other extension operators acting on $F^s_{p,q}(\Omega)$, mainly when $\Omega$ is a Lipschitz domain. For instance \cite[Theorem 2.2]{Rychkov} presents an operator which acts continuously in a family of Triebel-Lizorkin and Besov spaces with unbounded regularity parameter. From Theorem \ref{theoEquivalent} it follows that these extension operators are also continuous in $A^{s}_{p,q}(\Omega)$ for Lipschitz domains. 

{Similar problems can be considered for Besov domains. In \cite{Dispa} the reader can find a characterization in terms of differences for Besov spaces on Lipschitz domains. It would be interesting to know whether this results have or do not have counterparts for Besov spaces on uniform domains. In \cite{Seeger} Seeger could find characterizations for the Triebel-Lizorkin norms on uniform domains using means on balls of higher order differences of the function. It would be interesting to find equivalent characterizations using first order differences of derivatives of the function, and such that the size of the ball varies with the distance of its center to the boundary, question that can also be studied in the Besov space.}

Section \ref{secUniform} is devoted to define uniform domains and to recall the main properties of their Whitney coverings, regarding sums on chains of cubes (denoted Cigars in \cite{Vaisala}) and shadows (commonly known as ``Carleson boxes''). 

Section \ref{secExtension} is the core of the present note. First the Jones' extension operator via Meyers' polynomials is introduced. This is followed by a lemma that settles a key estimate where the differences between $p<q$, $p=q$ and $p>q$ are overcome. After that the reader will find the proof of Theorem \ref{theoExtension}, divided in two parts. First the $W^{k,p}(\Omega)$ character is established and the weak derivative of the extension operator is given using some estimates from \cite{Jones}. Finally, using \cite{PratsSaksman} the boundedness in $A^s_{p,q}(\Omega)$ is reduced to controlling a series of ``error terms'' which are settled  using all the machinery developed in the aforementioned papers. 

Finally Section \ref{secMain} contains the proofs of Corollary \ref{coroPS} and Theorem \ref{theoEquivalent}.

{In the appendix we show how to extend the results in \cite{PratsSaksman} to the setting of unbounded domains and with the indices $p, q$ reaching the endpoints.}

Throughout the paper we will not pay much attention to the particular value of constants. Thus, $A\lesssim B$ means that there exists a universal constant $C$ such that $A\leq CB$. If we want to stress the dependence of the constant in certain parameters, for instance $\sigma$ and $p$, we will write $A\lesssim_{\sigma,p} B$ or $A\leq C_{\sigma, p} B$.

\section{Uniform domains}\label{secUniform}
\begin{figure}[h]
 {\centering
  \includegraphics[width=0.65 \linewidth]{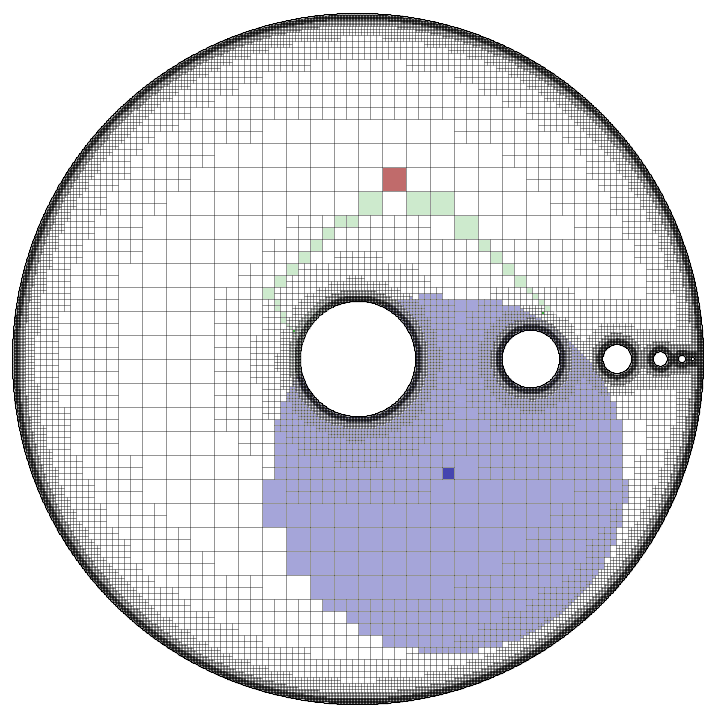}  
\caption{A uniform domain with a Whitney covering. In the upper part there is an admissible chain joining two cubes (the central one shaded), on the lower part the shadow of another cube.}\label{figWhitney}}
\end{figure}

\begin{definition}\label{defWhitney}
Given a domain $\Omega$, we say that a collection of open dyadic cubes $\mathcal{W}$ is a {\rm Whitney covering} of $\Omega$ if they are disjoint, the union of the cubes and their boundaries is $\Omega$, there exists a constant $C_{\mathcal{W}}$ such that 
$$C_\mathcal{W} \ell(Q)\leq \Dist(Q, \partial\Omega)\leq 4C_\mathcal{W}\ell(Q),$$
and the family $\{50 Q\}_{Q\in\mathcal{W}}$ has finite superposition. Moreover, we will assume that 
\begin{equation}\label{eqWhitney5}
S\subset 5Q \implies \ell(S)\geq \frac12 \ell(Q).
\end{equation}
\end{definition}
The existence of such a covering is granted for any open set different from $\R^d$ and in particular for any domain as long as $C_\mathcal{W}$ is big enough (see \cite[Chapter 1]{SteinPetit} for instance).

\begin{definition}\label{defEpsilonAdmissible}
Let $\Omega$ be a domain, $\mathcal{W}$ a Whitney decomposition of $\Omega$ and $Q,S\in\mathcal{W}$.  Given $M$ cubes $Q_1,\dots,Q_M\in\mathcal{W}$ with $Q_1=Q$ and $Q_M=S$, the $M$-tuple $(Q_1,\dots,Q_M)\in\mathcal{W}^M$  is a {\em chain} connecting $Q$ and $S$ if the cubes $Q_j$ and $Q_{j+1}$ are neighbors for $j<M$. We write $[Q,S]=(Q_1,\dots,Q_M)$ for short.

Let $\varepsilon\in\R$. We say that the chain $[Q,S]$ is {\em $\varepsilon$-admissible} if 
\begin{itemize}
\item the \emph{length}  of the chain is bounded by
\begin{equation}\label{eqLengthDistance}
\ell([Q,S]):=\sum_{j=1}^M\ell(Q_j)\leq \frac1\varepsilon\Dist(Q,S)
\end{equation}
\item and there exists $j_0<M$ such that the cubes in the chain satisfy
\begin{equation}\label{eqAdmissible1}
\ell(Q_j)\geq\varepsilon \Dist(Q_1,Q_j) \mbox{ for all } j\leq j_0 \mbox{\quad\quad  and \quad\quad }
\ell(Q_j)\geq\varepsilon \Dist(Q_j,Q_M) \mbox{ for all } j\geq j_0 .
\end{equation}
\end{itemize}
The $j_0$-th cube, which we call \emph{central}, satisfies that $\ell(Q_{j_0})\gtrsim_d \varepsilon \Dist(Q,S)$ by \rf{eqAdmissible1} and the triangle inequality. We will write  $Q_S=Q_{j_0}$. Note that this is an abuse of notation because the central cube of $[Q,S]$ may vary for different $\varepsilon$-admissible chains joining $Q$ and $S$.

We write (abusing notation again) $[Q,S]$ also for the set $\{Q_j\}_{j=1}^M$. Thus, we will write $P\in[Q,S]$ if $P$ appears in a coordinate of the $M$-tuple $[Q,S]$.
\end{definition}

Consider a domain $\Omega$ with covering $\mathcal{W}$ and two cubes $Q,S\in\mathcal{W}$ with an $\varepsilon$-admissible chain $[Q,S]$. From Definition \ref{defEpsilonAdmissible} it follows that
\begin{equation}\label{eqAdmissible2}
\Dist(Q,S)\approx_{\varepsilon,d} \ell([Q,S])\approx_{\varepsilon,d} \ell(Q_S).
\end{equation}
%
 
\begin{definition}\label{defUniform}
We say that a domain $\Omega\subset\R^d$ is a {\em uniform domain} if there exists a Whitney covering $\mathcal{W}$ of $\Omega$ and $\varepsilon \in\R$ such that for any pair of cubes $Q,S \in\mathcal{W}$, there exists an $\varepsilon$-admissible chain $[Q,S]$. Sometimes  will write {\em $\varepsilon$-uniform domain} to fix the constant $\varepsilon$ (see Figure \ref{figWhitney}).
\end{definition}
 
For $1\leq j_1\leq j_2\leq M$, the subchain $[Q_{j_1},Q_{j_2}]_{[Q,S]} \subset[Q,S]$ is defined as $(Q_{j_1},Q_{j_1+1},\dots,Q_{j_2})$. We will write $[Q_{j_1},Q_{j_2}]$ if there is no risk of confusion.
Now we can define the shadows:
\begin{definition}\label{defShadow}
Let $\Omega$ be an $\varepsilon$-uniform domain with Whitney covering $\mathcal{W}$. 
Given a cube $P\in\mathcal{W}$ centered at $x_P$ and a real number $\rho$,  the {\em $\rho$-shadow} of $P$ is the collection of cubes
$$\SH_\rho(P)=\{Q\in\mathcal{W}:Q\subset B(x_P,\rho\,\ell(P))\}, $$
and its  {\em ``realization''} is the set
$$\Sh_{\rho}(P)=\bigcup_{Q\in\SH_\rho(P)} Q.$$

By the previous remark and the properties of the Whitney covering, we can define $\rho_\varepsilon>1$ such that the following properties hold:
\begin{itemize}
\item For every $\varepsilon$-admissible chain $[Q,S]$, and every $P\in[Q,Q_S]$ we have that $Q\in\SH_{\rho_\varepsilon}(P)$.
\item Moreover, every cube $P$ belonging to an $\varepsilon$-admissible chain $[Q,S]$ belongs to the shadow $\SH_{\rho_\varepsilon}(Q_S)$.
\end{itemize}
\end{definition}

\begin{remark}[see {\cite[Remark 2.6]{PratsSaksman}}]
\label{remInTheShadow}
Given an $\varepsilon$-uniform domain $\Omega$ we will write $\Sh$ for $\Sh_{\rho_\varepsilon}$. We will write also $\SH$ for $\SH_{\rho_{\varepsilon}}$.

For $Q\in\mathcal{W}$ and $s>0$,  we have that  
\begin{equation}\label{eqAscendingToGlory}
 \sum_{L: Q\in \SH(L)}\ell(L)^{-s} \lesssim \ell(Q)^{-s} 
 \end{equation}
and, moreover, if $Q\in\SH(P)$, then
\begin{equation}\label{eqAscendingPath}
 \sum_{L\in[Q,P]}\ell(L)^{s} \lesssim \ell(P)^s \mbox{\quad\quad and \quad\quad}  \sum_{L\in[Q,P]}\ell(L)^{-s}\lesssim \ell(Q)^{-s} .
 \end{equation}
\end{remark}

We recall the definition of the non-centered Hardy-Littlewood maximal operator. Given $f\in L^1_{loc}(\R^d)$ and $x\in\R^d$, we define $Mf(x)$ as the supremum of the mean of $f$ in cubes containing $x$, that is,
$$Mf(x)=\sup_{Q:  x\in Q} \frac{1}{|Q|} \int_Q f(y) \, dy.$$
It is a well known fact that this operator is bounded in $L^p$ for $1<p<\infty$.
The following lemma is proven in \cite{PratsTolsa} and will be used repeatedly along the proofs contained in the present text.

\begin{lemma}\label{lemMaximal}
Let $\Omega$ be a uniform domain with an admissible Whitney covering $\mathcal{W}$. Assume that $g\in L^1(\Omega)$ and $r>0$. For every $\eta>0$, $Q\in\mathcal{W}$ and $x\in \R^d$, we have
\begin{enumerate}[1)]
\item The non-local {inequalities} for the maximal operator
	\begin{equation}\label{eqMaximalFar}
	 \int_{|y-x|>r} \frac{g(y) \, dy}{|y-x|^{d+\eta}}\lesssim_d \frac{Mg(x)}{r ^\eta}
\mbox{\quad\quad and \quad\quad}
	 \sum_{S:\Dist(Q,S)>r}  \frac{\int_S g(y) \, dy}{\Dist(Q,S)^{d+\eta}}\lesssim_d \frac{\inf_{y\in Q} Mg(y)}{r ^\eta}.
	 \end{equation}
\item The local {inequalities} for the maximal operator
	\begin{equation}\label{eqMaximalClose}
	 \int_{|y-x|<r} \frac{g(y) \, dy}{|y-x|^{d-\eta}}\lesssim_d r ^\eta Mg(x)
\mbox{\quad\quad and \quad\quad}
	\sum_{S:\Dist(Q,S)<r}  \frac{\int_S g(y) \, dy}{\Dist(Q,S)^{d-\eta}}\lesssim_d \inf_{y\in Q} Mg(y) \,r^\eta.
	 \end{equation}
\item In particular we have
	\begin{equation}\label{eqMaximalAllOver}
		\sum_{S\in\mathcal{W}} \frac{\ell(S)^d}{\Dist(Q,S)^{d+\eta}} \lesssim_d \frac{1}{\ell(Q)^\eta}
\mbox{\quad\quad and \quad\quad}
		\sum_{S\in \SH_{{\rho}}(Q)} \ell(S)^{d} \lesssim_{d,\rho} \ell(Q)^d
	\end{equation}
and, by Definition \ref{defShadow},
	\begin{equation}\label{eqMaximalGuay}
	\sum_{S\in\SH_{{\rho}} (Q)} \int_S g(x) \, dx\lesssim_{d,\rho} \inf_{y\in Q} Mg(y) \, \ell(Q)^d.
	 \end{equation}
\end{enumerate}
\end{lemma}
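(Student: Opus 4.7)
The plan is to reduce every bound in Lemma \ref{lemMaximal} to the classical dyadic-annulus decomposition used to control the Hardy--Littlewood maximal operator, and then translate the resulting integral estimates into sums over Whitney cubes by exploiting the bounded overlap of the covering and the fact that $\ell(S)\approx\Dist(Q,S)$-sized objects cluster in tame annular groups. The whole lemma is in this sense a bookkeeping exercise: all the analytic content is already encoded in the single inequality $\int_{B(x,R)} g \lesssim R^d\, Mg(x)$.

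For the continuous non-local inequality in \rf{eqMaximalFar}, I would split the region $\{|y-x|>r\}$ into the dyadic shells $A_k=\{2^k r\leq |y-x|<2^{k+1}r\}$ for $k\geq 0$, bound $|y-x|^{-d-\eta}$ by $(2^kr)^{-d-\eta}$ on $A_k$, and use $\int_{A_k} g \leq \int_{B(x,2^{k+1}r)} g \lesssim_d (2^kr)^d Mg(x)$. Summing the resulting geometric series $\sum_k 2^{-k\eta}$ yields the factor $r^{-\eta}Mg(x)$. The continuous local inequality in \rf{eqMaximalClose} is analogous but with shells $\{2^{-k-1}r\leq |y-x|<2^{-k}r\}$, $k\geq 0$: now $|y-x|^{-d+\eta}\lesssim 2^{k(d-\eta)}r^{-d+\eta}$, the measure of the ball gives a gain $2^{-kd}$, and $\eta>0$ ensures the series $\sum 2^{-k\eta}$ converges, leaving $r^\eta Mg(x)$.

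To pass to the discrete versions of \rf{eqMaximalFar} and \rf{eqMaximalClose}, I would fix any $y\in Q$ and group the Whitney cubes $S$ according to the dyadic scale of $\Dist(Q,S)$. For each group with $\Dist(Q,S)\in[2^k r,2^{k+1}r)$, the union $\bigcup_S S$ is contained in a ball $B(y,C2^k r)$ (using the triangle inequality and $\diam(S)\lesssim \Dist(Q,S)$ from the Whitney property), and the finite superposition of $\{50S\}$ makes the sum $\sum_S \int_S g$ comparable to a single integral over that ball, which is in turn $\lesssim (2^kr)^d Mg(y)$. Taking the infimum over $y\in Q$ and summing the same geometric series as before gives the claimed bounds.

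The consequences in \rf{eqMaximalAllOver} follow by specializing the sum version of \rf{eqMaximalFar} to $g\equiv 1$ (after separating the cubes with $\Dist(Q,S)\lesssim \ell(Q)$, which are $O(1)$ in number and all of size $\approx \ell(Q)$ by \rf{eqWhitney5}), while the shadow bound comes from the inclusion $\bigcup_{S\in\SH_\rho(Q)}S\subset B(x_P,\rho\,\ell(Q))$ combined with bounded overlap, so $\sum\ell(S)^d\lesssim |B(x_Q,\rho\ell(Q))|\lesssim_{d,\rho}\ell(Q)^d$. Finally, \rf{eqMaximalGuay} is exactly the same geometric observation: the shadow sits inside a ball of radius $\approx\ell(Q)$ around any $y\in Q$, so $\sum_{S\in\SH_\rho(Q)}\int_S g\lesssim \int_{B(y,C\ell(Q))}g\lesssim \ell(Q)^d Mg(y)$, and one takes the infimum in $y\in Q$. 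The only mild subtlety, and therefore the place where I would be most careful, is keeping the constants in the Whitney estimate $\ell(S)\approx\Dist(Q,S)$ (for cubes far from $Q$) uniform across the annular groups, so that all geometric sums $\sum_k 2^{-k\eta}$ really are geometric and do not accumulate logarithms.
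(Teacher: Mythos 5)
Your proof is correct, and it is also, in essence, the proof the cited source uses: the paper itself does not prove Lemma \ref{lemMaximal} but imports it verbatim from \cite{PratsTolsa}, where exactly this dyadic-annulus argument plus bounded overlap of the Whitney family is carried out. Your decomposition into shells $\{2^kr\leq|y-x|<2^{k+1}r\}$ (resp. $\{2^{-k-1}r\leq|y-x|<2^{-k}r\}$), the observation that the Whitney cubes in a given annular group are disjoint and hence contribute at most a single ball integral, and the passage to \rf{eqMaximalAllOver}--\rf{eqMaximalGuay} via $g\equiv 1$ and the inclusion $\Sh_\rho(Q)\subset B(x_Q,\rho\ell(Q))$ are all exactly as expected. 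Two small remarks worth keeping in mind. First, the geometric series $\sum_k 2^{-k\eta}$ makes the implicit constant depend on $\eta$ as well as on $d$, which the paper's notation $\lesssim_d$ silently suppresses; this is a cosmetic imprecision in the statement, not a gap in your argument. Second, your treatment of the ``near'' cubes with $\Dist(Q,S)\lesssim\ell(Q)$ in \rf{eqMaximalAllOver} works, but is actually unnecessary here: since $\Dist$ is the long distance $\ell(Q)+\ell(S)+\dist(Q,S)$ (this is what makes $\diam(S)\lesssim\Dist(Q,S)$ true, as you use, and is consistent with \rf{eqAdmissible2}), one always has $\Dist(Q,S)\geq\ell(Q)$, so applying the discrete far inequality with $r$ slightly below $\ell(Q)$ already covers every $S\in\mathcal{W}$. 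You should make the long-distance convention explicit, as your grouping by scale and the containment $\bigcup_S S\subset B(y,C2^kr)$ silently rely on it.
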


\section{Peter Jones' extension}\label{secExtension}
\begin{figure}[h]
 {\centering
  \includegraphics[width=0.5 \linewidth]{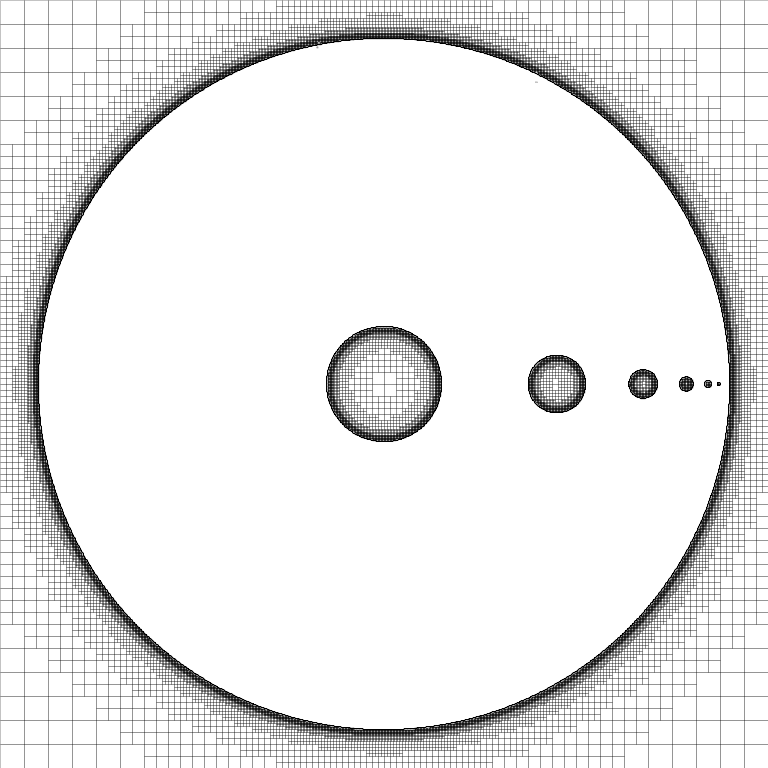}  
\caption{The covering $\mathcal{W}_2$ of $\overline{\Omega}^c$.}\label{figW2}}
\end{figure}

Consider a given $\varepsilon$-uniform domain $\Omega$. In \cite{Jones} Peter Jones defines an extension operator $\Lambda_k:W^{k+1,p}(\Omega) \to W^{k+1,p}(\R^d)$ for  $1\leq p \leq \infty$. This extension operator is used to prove that the intrinsic characterization of $W^{k+1,p}(\Omega)$ given by
$$\norm{f}_{W^{k+1,p}(\Omega)}\approx \norm{f}_{L^p(\Omega)}+\norm{\nabla^{k+1} f}_{L^p(\Omega)}$$
is equivalent to the restriction norm. Next we will see that the same operator is an extension operator for $A^{k+\sigma}_{p,q}(\Omega)$ for $0<\sigma<1$ with $\sigma>\frac{d}{p}-\frac{d}{q}$. 

To define it we need a Whitney covering $\mathcal{W}_1$ of $\Omega$, a Whitney covering $\mathcal{W}_2$ of $\bar{\Omega}^c$ (see Figure \ref{figW2}), and we define $\mathcal{W}_3$ to be the collection of cubes in $\mathcal{W}_2$ with side-lengths small enough, say $\ell(Q)\leq \ell_0$, so that for any $Q\in \mathcal{W}_3$ there is a cube $S\in \mathcal{W}_1$ with $\Dist(Q,S)\leq C \ell(Q)$ and $\ell(Q)=\ell(S)$ (see \cite[Lemma 2.4]{Jones}). We define the symmetrized cube $Q^*$ as one of the cubes satisfying these properties (see Figure \ref{figSymmetrization}). Note that the number of possible choices for $Q^*$ is uniformly bounded and, if $\Omega$ is an unbounded uniform domain, then {$\ell_0<\infty$ can be chosen freely}.
\begin{figure}[h]
 {\centering
  \includegraphics[width=0.6 \linewidth]{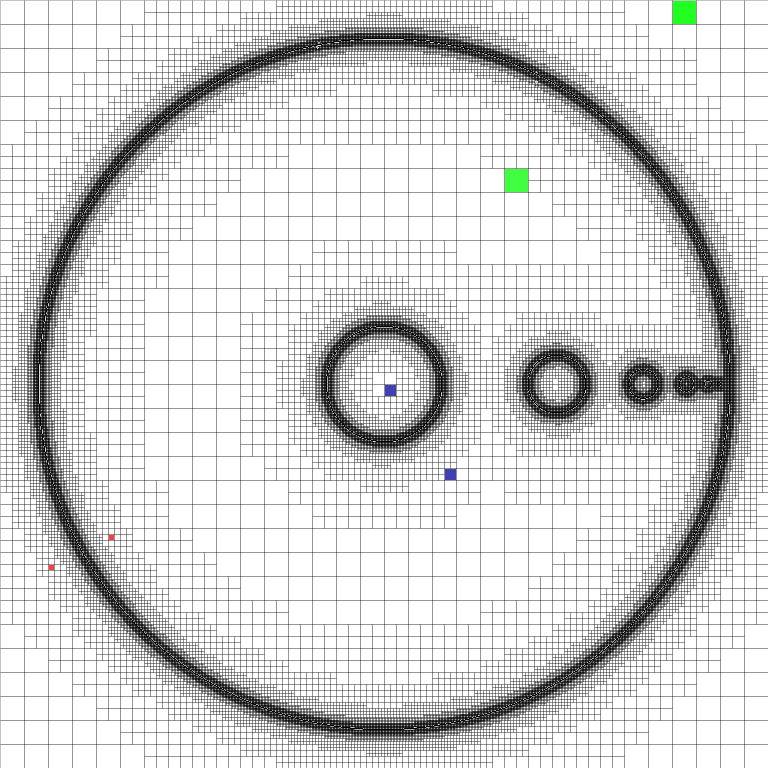}  
\caption{Pairs of cubes $Q$ and $Q^*$ by the symmetrization relation.}\label{figSymmetrization}}
\end{figure}

\begin{lemma}\label{lemSymmetrized}[see \cite{Jones}]
For cubes $Q_1,Q_2\in\mathcal{W}_3$ and $S\in\mathcal{W}_1$ we have that
\begin{itemize}
\item The symmetrized cubes have finite overlapping: there exists a constant $C$ depending on the parameter $\varepsilon$ and the dimension $d$ such that $\#\{Q\in\mathcal{W}_3: Q^*=S\}\leq C$.
\item The long distance is invariant in the following sense:
\begin{equation}\label{eqLongDistanceInvariant}
\Dist(Q_1^*,Q_2^*)\approx \Dist(Q_1,Q_2) \mbox{\quad\quad and \quad\quad}\Dist(Q_1^*,S)\approx \Dist(Q_1,S) 
\end{equation}
\item In particular, if ${Q_1}\cap2{Q_2}\neq \emptyset$ ($Q_1$ and $Q_2$ are neighbors by \rf{eqWhitney5}), then $\Dist(Q_1^*,Q_2^*)\approx \ell(Q_1)$.
\end{itemize}
\end{lemma}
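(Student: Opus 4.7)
The three bullets of Lemma \ref{lemSymmetrized} all stem from the two defining properties of the symmetrisation, $\ell(Q^*)=\ell(Q)$ and $\Dist(Q,Q^*)\leq C\ell(Q)$, together with the Whitney comparability $\ell(Q_i)\approx \Dist(Q_i,\partial\Omega)$. For the finite overlapping, I would fix $S\in\mathcal{W}_1$ and observe that any $Q\in\mathcal{W}_3$ with $Q^*=S$ is a dyadic cube of the fixed sidelength $\ell(S)$ whose closure meets the ball of radius $C'\ell(S)$ around the centre of $S$. Since dyadic cubes of a prescribed size tile $\R^d$ in a translation-invariant way, only $(2C'+1)^d$ of them can do so, and this bound depends only on $d$ and on the constant $C$ built into the definition of the symmetrisation.

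For the long-distance invariance, I would apply the triangle inequality to points realising the relevant infima to obtain
$$\Dist(Q_1^*,Q_2^*)\leq \Dist(Q_1,Q_1^*)+\Dist(Q_1,Q_2)+\Dist(Q_2,Q_2^*)\leq \Dist(Q_1,Q_2)+C\bigl(\ell(Q_1)+\ell(Q_2)\bigr),$$
together with the symmetric reverse estimate obtained by exchanging the roles of the cubes and their symmetrisations. In the regime of interest here the cubes $Q_1,Q_2\in\mathcal{W}_3$ lie in $\overline{\Omega}^c$ while $Q_1^*,Q_2^*\subset\Omega$, so each $\Dist(Q_i,Q_i^*)$ is already comparable to $\ell(Q_i)\approx\Dist(Q_i,\partial\Omega)$; in the convention used throughout the paper, where $\Dist$ between cubes measures distance between cube centres, this forces both $\Dist(Q_1,Q_2)$ and $\Dist(Q_1^*,Q_2^*)$ to dominate $\ell(Q_1)+\ell(Q_2)$, so the additive error produced by the triangle inequality is absorbed and the two distances are comparable. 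The second half of the bullet, $\Dist(Q_1^*,S)\approx \Dist(Q_1,S)$ with $S\in\mathcal{W}_1$, is verbatim the same argument but with the cube $S$ left untouched.

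The third bullet is then immediate: if $Q_1\cap 2Q_2\neq \emptyset$ the Whitney property forces $\ell(Q_1)\approx \ell(Q_2)$ and $\Dist(Q_1,Q_2)\approx \ell(Q_1)$, and feeding these comparabilities into the previous estimate yields $\Dist(Q_1^*,Q_2^*)\approx \ell(Q_1)$. The only genuinely delicate step in the whole proof is the triangle-inequality bookkeeping of the second bullet, where one has to check that the additive errors of size $\ell(Q_1)+\ell(Q_2)$ are never larger than the distances being compared; this in turn reduces to the Whitney comparability $\ell(Q_i)\approx \Dist(Q_i,\partial\Omega)$ and to the geometric fact that $Q_i$ and $Q_i^*$ sit on opposite sides of $\partial\Omega$, which is exactly the information encoded in the definitions of $\mathcal{W}_1,\mathcal{W}_2$ and $\mathcal{W}_3$.
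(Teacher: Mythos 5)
Your argument is correct and is essentially the standard one; the paper itself does not reproduce a proof but defers to Jones, so there is no alternative approach to weigh against. All three bullets do reduce, as you say, to the two defining properties $\ell(Q^*)=\ell(Q)$ and $\Dist(Q,Q^*)\lesssim\ell(Q)$, a counting argument for dyadic cubes of a fixed sidelength, and the approximate triangle inequality for the long distance together with the lower bound $\Dist(Q,S)\gtrsim\ell(Q)+\ell(S)$.

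One small caveat: you justify the lower bound $\Dist(Q_1,Q_2)\gtrsim\ell(Q_1)+\ell(Q_2)$ by asserting that $\Dist$ denotes the distance between cube centres, and then lean on the fact that $Q_i$ and $Q_i^*$ sit on opposite sides of $\partial\Omega$. In this paper (and in \cite{Jones}, \cite{PratsTolsa}, \cite{PratsSaksman}) the symbol $\Dist$ is the usual \emph{long distance} $\Dist(Q,S)=\ell(Q)+\ell(S)+\dist(Q,S)$ — note, for instance, that \rf{eqMaximalAllOver} sums $\ell(S)^d/\Dist(Q,S)^{d+\eta}$ over all $S$, including $S=Q$, so $\Dist(Q,Q)$ must be $\approx\ell(Q)$ rather than $0$. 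Under that convention the lower bound $\Dist(Q,S)\geq\ell(Q)+\ell(S)$ is immediate for arbitrary cubes, so the ``opposite sides of the boundary'' observation you appeal to is in fact not needed, and the ``delicate bookkeeping'' you flag in the second bullet is even more automatic than you make it out to be. In any case the inequality you need holds under either reading, so the argument is sound as written; I only point it out so that you anchor the absorption step to the definition of $\Dist$ actually in use rather than to a geometric fact about where the cubes sit.
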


We define the family of bump functions $\{\psi_Q\}_{Q\in \mathcal{W}_2}$ to be a partition of the unity associated to $\left\{\frac{11}{10}Q\right\}_{Q\in\mathcal{W}_2}$, that is, their sum $\sum\psi_Q\equiv 1$, they satisfy the pointwise inequalities $0\leq \psi_Q\leq \chi_{\frac{11}{10}Q}$ and $\norm{\nabla^j\psi_Q}_\infty \lesssim \frac{1}{\ell(Q)^j}$ for $j\leq k$.

Norman G. Meyers introduced a collection of projections $L:W^{k,p}(Q) \to \mathcal{P}^k$ in \cite{Meyers} which allows us to iterate the Poincar\'e inequality. Peter Jones uses the following particular simple case, whose existence is granted by elementary linear algebra:
\begin{definition}
Let  $Q \subset \R^d$. Given $f\in L^1(Q)$ with weak derivatives up to order $k$, we define $\mathbf{P}^{k}_Q f\in \mathcal{P}^{k}$ as the unique polynomial of degree smaller or equal than $k$ such that 
\begin{equation}\label{eqdefpnQ}
\int_{Q} D^\beta \mathbf{P}_Q^k f \,dm=\int_{Q} D^\beta f\, dm
\end{equation}
for every multiindex $\beta \in \N^d$ with $|\beta| \leq k$.
\end{definition}

\begin{lemma}[see {\cite[Lemma 4.2]{PratsTolsa}}]\label{lempoly}
Given a cube $Q$ and $f\in W^{k,1}(Q)$, the polynomial $\mathbf{P}^{k}_{Q} f\in \mathcal{P}^{k}$ exists and is unique.
Furthermore, this polynomial has the following properties:
\begin{enumerate}
\item Let $x_Q$ be the center of $Q$. If we consider the Taylor expansion of $\mathbf{P}_{Q}^{k} f$ at $x_Q$, 
\begin{equation}\label{eqtaylorexp}
\mathbf{P}_{Q}^{k} f (y)= \sum_{\substack{\gamma\in\N^d \\ |\gamma|\leq k}}m_{Q,\gamma} (y-x_Q)^\gamma,
\end{equation}
then the coefficients $m_{Q,\gamma}$ are bounded by
\begin{equation}\label{eqP1}
|m_{Q,\gamma}| \leq c_k  \sum_{j=|\gamma|}^{k} \norm{\nabla^{j} f}_{L^1(Q)} \ell(Q)^{j-|\gamma|-d}. 
\end{equation}
In particular, 
\begin{equation}\label{eqP1Bis}
\norm{P_{Q}^{k} f}_{L^p(Q)}\leq c_k  \sum_{j=0}^{k}  \ell(Q)^j \norm{\nabla^{j} f}_{L^p(Q)} \quad\quad \mbox{for } 1\leq p \leq \infty. 
\end{equation}

\item Furthermore, if $f\in W^{k,p}(Q)$, for ${1\leq p \leq \infty}$ we have
 \begin{equation}\label{eqP2}
\|f-\mathbf{P}_{Q}^{k} f\|_{L^p(Q)}\leq C \ell(Q)^k \norm{\nabla^k f - (\nabla^k f)_{Q}}_{L^p(Q)} .
 \end{equation}
{Here and through all the text $(f)_Q$ will denote the mean of $f$ in a cube $Q$, with $f$ possibly vector-valued.}

\item Given a uniform domain $\Omega$ with Whitney covering $\mathcal{W}$, given $\beta \in \N_0^d$ with $|\beta|\leq k$ and given two Whitney cubes $Q, S\in \mathcal{W}$ and $f\in W^{k,p}(\Omega)$, 
\begin{equation}\label{eqP3}
\norm{D^\beta (\mathbf{P}^{k}_{S} f-\mathbf{P}^{k}_{Q} f)}_{L^p(S)}  \leq \sum_{P\in [S,Q]}\frac{\ell(S)^\frac dp \Dist(P,S)^{k-|\beta|}}{\ell(P)^\frac dp}\norm{\nabla^k f - (\nabla^k f)_P}_{L^p(P)}.
\end{equation}
%
%
\end{enumerate}
\end{lemma}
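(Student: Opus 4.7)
\textbf{Plan for proving Lemma \ref{lempoly}.}

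For part (1), I would first argue existence and uniqueness by counting: expanding $\mathbf{P}_Q^k f$ in the basis $(y-x_Q)^\gamma$, there are $\binom{k+d}{d}$ unknown coefficients $m_{Q,\gamma}$, and the Meyers condition \rf{eqdefpnQ} gives exactly the same number of linear constraints (one per multiindex $\beta$ with $|\beta|\leq k$). Since $Q$ is symmetric about $x_Q$, the integral $\int_Q (y-x_Q)^\delta\,dy$ vanishes whenever any component of $\delta$ is odd, so grouping multiindices $\beta$ by the orbit of $\gamma-\beta \mapsto \gamma$ shows the induced system is triangular from $|\beta|=k$ downwards, with invertible diagonal blocks. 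Solving top-down yields \rf{eqP1}: at level $|\beta|=k$, the coefficients $m_{Q,\gamma}$ with $|\gamma|=k$ are explicit averages of $D^\gamma f$ on $Q$, giving $|m_{Q,\gamma}|\lesssim \ell(Q)^{-d}\norm{\nabla^k f}_{L^1(Q)}$; the recursion then introduces the sum over $j=|\gamma|,\dots,k$ with the powers $\ell(Q)^{j-|\gamma|-d}$. Finally \rf{eqP1Bis} is immediate from \rf{eqP1} by estimating $\|(y-x_Q)^\gamma\|_{L^p(Q)}\leq \ell(Q)^{|\gamma|+d/p}$ and applying Hölder's inequality $\norm{\nabla^j f}_{L^1(Q)}\leq\ell(Q)^{d-d/p}\norm{\nabla^j f}_{L^p(Q)}$.

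For part (2), I would use the iterated Poincaré inequality. The crucial observation is that, by construction, $D^\beta(f-\mathbf{P}_Q^k f)$ has mean zero on $Q$ for every $|\beta|\leq k$. Applying the standard Poincaré inequality on the cube $Q$ to each partial derivative $D^\beta(f-\mathbf{P}_Q^k f)$ gives $\|D^\beta(f-\mathbf{P}_Q^k f)\|_{L^p(Q)}\lesssim \ell(Q)\,\|\nabla D^\beta(f-\mathbf{P}_Q^k f)\|_{L^p(Q)}$, and iterating $k$ times reduces to $\|\nabla^k(f-\mathbf{P}_Q^k f)\|_{L^p(Q)}$. Since $\mathbf{P}_Q^k f$ has degree $k$, the degree-$k$ part of its gradient $\nabla^k \mathbf{P}_Q^k f$ is a constant vector, and the Meyers condition at $|\beta|=k$ forces this constant to be $(\nabla^k f)_Q$, yielding \rf{eqP2}.

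For part (3), the idea is telescoping along the chain $[S,Q]=(Q_1,\dots,Q_M)$ and transplanting the polynomial from $P$ to $S$. For adjacent cubes $P,P'\in[S,Q]$ with $\ell(P)\approx\ell(P')$, I would bound
$$\norm{\mathbf{P}_{P'}^k f-\mathbf{P}_P^k f}_{L^p(P)}\lesssim \ell(P)^k\bigl(\norm{\nabla^k f-(\nabla^k f)_P}_{L^p(P)}+\norm{\nabla^k f-(\nabla^k f)_{P'}}_{L^p(P')}\bigr)$$
by writing the difference as $(\mathbf{P}_{P'}^k f-f)+(f-\mathbf{P}_P^k f)$ on $P\cup P'$, using \rf{eqP2} on each cube, and invoking the norm equivalence $\|R\|_{L^p(P)}\approx\|R\|_{L^p(P')}$ for polynomials of bounded degree on neighboring cubes. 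Next, for any polynomial $R$ of degree $\leq k-|\beta|$, a Markov–type inequality combined with scaling gives
$$\norm{R}_{L^p(S)}\lesssim \frac{\ell(S)^{d/p}}{\ell(P)^{d/p}}\Bigl(\frac{\Dist(P,S)}{\ell(P)}\Bigr)^{k-|\beta|}\norm{R}_{L^p(P)},$$
since the polynomial grows at most like $(|x-x_P|/\ell(P))^{\deg R}$ outside $P$. Applying this with $R=D^\beta(\mathbf{P}_{P'}^k f-\mathbf{P}_P^k f)$, using the Markov inequality $\|D^\beta\tilde R\|_{L^p(P)}\lesssim\ell(P)^{-|\beta|}\|\tilde R\|_{L^p(P)}$ to absorb the $D^\beta$, and summing the telescoping identity $\mathbf{P}_S^k f-\mathbf{P}_Q^k f=\sum_{j}(\mathbf{P}_{Q_{j+1}}^k f-\mathbf{P}_{Q_j}^k f)$ over the chain produces \rf{eqP3}.

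The main obstacle I expect is part (3): the simultaneous appearance of the factors $\ell(S)^{d/p}/\ell(P)^{d/p}$ and $\Dist(P,S)^{k-|\beta|}$ forces one to carefully juggle two different scales — the intrinsic scale $\ell(P)$ at which the Poincaré estimate lives and the extrinsic scale $\Dist(P,S)$ at which the polynomial is eventually evaluated. Keeping track of these powers, and checking that the Markov-type transplantation inequality does not absorb the wrong factor of $\ell(P)$, is the delicate point; everything else is either linear algebra or a standard iteration of Poincaré.
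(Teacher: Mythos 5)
The paper does not actually prove Lemma~\ref{lempoly}; it is quoted from \cite[Lemma 4.2]{PratsTolsa}, so there is no in-paper proof to compare against. Your plans for parts (1) and (2) are sound: the Meyers conditions give a block-triangular linear system (ordered by $|\gamma|$) with invertible diagonal since $\int_Q D^\beta(y-x_Q)^\gamma\,dy$ survives for $\gamma=\beta$ and vanishes unless $\gamma\geq\beta$; for part (2) the two facts you isolate — zero mean of every $D^\gamma(f-\mathbf{P}_Q^k f)$ enabling iterated Poincar\'e, and $\nabla^k\mathbf{P}_Q^k f\equiv(\nabla^k f)_Q$ — are exactly what is needed.

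Part (3), however, has a real gap. Your adjacent-cube estimate
\begin{equation*}
\norm{\mathbf{P}_{P'}^k f-\mathbf{P}_P^k f}_{L^p(P)}\lesssim \ell(P)^k\bigl(\norm{\nabla^k f-(\nabla^k f)_P}_{L^p(P)}+\norm{\nabla^k f-(\nabla^k f)_{P'}}_{L^p(P')}\bigr)
\end{equation*}
is false as stated. Take $d=1$, $k=1$, $P=(-1,0)$, $P'=(0,1)$ (unit Whitney cubes of, say, $\Omega=(-3,3)$), and $f(x)=\max(x,0)\in W^{1,\infty}$. Then $f'\equiv 0$ on $P$ and $f'\equiv 1$ on $P'$, so both oscillation norms on the right vanish; yet $\mathbf{P}_P^1 f\equiv 0$ while $\mathbf{P}_{P'}^1 f(y)=y$, so the left side is $\norm{y}_{L^p(-1,0)}>0$. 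The source of the problem is that the split $(\mathbf{P}_{P'}^k f-f)+(f-\mathbf{P}_P^k f)$ ``on $P\cup P'$'' is not directly admissible: \rf{eqP2} controls each piece only on its own cube, neither piece is a polynomial (so polynomial norm equivalence cannot be applied term-by-term), and neighboring Whitney cubes overlap only in a set of measure zero, so there is no common region to exploit. The correct route is to run the iterated Poincar\'e inequality on an enlarged cube $\tilde P\supset P\cup P'$ with $\ell(\tilde P)\approx\ell(P)$, using that $D^\gamma(f-\mathbf{P}_P^k f)$ has zero mean on the subcube $P\subset\tilde P$ (which is enough for Poincar\'e on $\tilde P$); this gives oscillation norms on $\tilde P$, which must then be resolved into Whitney cubes by the bounded overlap of $\{5Q\}_{Q\in\mathcal W}$. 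The Markov/transplantation step you use to move the estimate from the local scale $\ell(P)$ out to $S$ is fine and produces exactly the factors $\ell(S)^{d/p}\ell(P)^{-d/p}\Dist(P,S)^{k-|\beta|}$ appearing in \rf{eqP3}; the gap is only in the local adjacent-cube bound.
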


  We can define the operator
$$\Lambda_k f(x)= f(x)\chi_\Omega(x) + \sum_{Q\in\mathcal{W}_3} \psi_Q(x) P^k_{Q^*} f(x) \mbox{ for any }f\in W^{1,k}_{loc}(\Omega).$$
{This} function is defined almost everywhere because the boundary of the domain $\Omega$ has zero Lebesgue measure (see \cite[Lemma 2.3]{Jones}).

\begin{proof}[{Proof of Theorem \ref{theoExtension}}]
Let $f\in A^s_{p,q}(\Omega)$. We want to check that 
${\Lambda_k}f\in W^{k,p}(\R^n)$ and 
$$\norm{\Lambda_k f}_{A^s_{p,q}(\R^n)}\leq C\norm{f}_{A^s_{p,q}(\Omega)}.$$

The case $k=0$ is shown in \cite[Theorem 1.4]{PratsSaksman}. Although it is not in the statement of that theorem, its  proof {can be extended to} the case of unbounded $\Omega$ {and $q=\infty$, see Theorem \ref{theoEndpoint} in the Appendix.}

Let us assume that $k\geq 1$, and consider $\alpha \in \N_0^d$ with $|\alpha| \leq k$. First we check that the distributional derivative $D^\alpha \Lambda_k f \in L^p(\R^d)$ and it coincides with $\chi_{\Omega} D^\alpha f + \sum_{Q\in \mathcal{W}_3} D^\alpha (\psi_Q {P}^k_{Q^*})$. To do so, for every cube $Q$ consider the polynomial 
$$\mathring{P}^k_Q:=P^k_Q-P^{k-1}_Q,$$
and let 
$$\mathring{\Lambda}^k f := \Lambda^k f - \Lambda^{k-1}f = \sum_{Q\in \mathcal{W}_3} \psi_Q \mathring{P}^k_{Q^*}.$$
Since $f\in W^{k,p}(\Omega)$ and $\Lambda^{k-1}: W^{k,p}(\Omega)\to W^{k,p}(\R^d)$, we have that $\Lambda^{k-1}f \in W^{k,p}(\R^d)$
and since the boundary of $\Omega$ has Lebesgue measure zero, the weak derivative coincides with its restrictions to $\Omega$ and $\overline{\Omega}^c$, that is, 
$$D^\alpha \Lambda^{k-1}f = \chi_\Omega D^\alpha_\Omega f + \chi_{\overline{\Omega}^c} D^\alpha_{\overline{\Omega}^c} \Lambda^{k-1}f  = \chi_\Omega D^\alpha f + \sum_{Q\in \mathcal{W}_3} D^\alpha (\psi_Q {P}^{k-1}_{Q^*}),$$
where we denoted $D^\alpha_U$ for the weak derivative on an open set $U$.
Thus, we only need to check that 
\begin{equation}\label{eqWeakDerivativeIntegrable}
\sum_{Q\in \mathcal{W}_3} D^\alpha (\psi_Q \mathring{P}^k_{Q^*})  \mbox{\quad \quad is an $L^p$ function}
\end{equation}
 for every such $\alpha$, and then show that
\begin{equation}\label{eqWeakDerivativeCoincides}
D^\alpha \mathring{\Lambda}^{k}f = \sum_{Q\in \mathcal{W}_3} D^\alpha (\psi_Q \mathring{P}^k_{Q^*}).
\end{equation}

 For multiindices $\alpha$ and $\beta$ we say $\beta<\alpha$ whenever $\beta_j< \alpha_j$ for $1\leq j\leq d$. Now, given $\beta<\alpha$ and $Q\in \mathcal{W}_1$, we have that 
$$\int_Q D^\beta P^k_Q f =\int_Q D^\beta f =\int_Q D^\beta P^{k-1}_Q f,$$
and applying Poincar\'e inequality recursively and \rf{eqP1Bis}, we obtain
$$\norm{D^\beta \mathring{P}^k_Q f}_{L^p(Q)}\leq \ell(Q)^{|\alpha-\beta|}\norm{\nabla^{|\alpha|} \mathring{P}^k_Q f}_{L^p(Q)}\lesssim \ell(Q)^{|\alpha-\beta|} {(1+\ell(Q))^{k-|\alpha|} } \norm{\nabla^{|\alpha|} f}_{W^{k-|\alpha|,p}(Q)}.$$
Thus, using the finite overlapping of the enlarged cubes of the Whitney covering, the equivalence of the norms of polynomials and the previous fact, we get
\begin{align*}
\int_{\R^d} 
	& \left| \sum_{Q\in \mathcal{W}_3} D^\alpha (\psi_Q \mathring{P}^k_{Q^*}f) (x) \right|^p dx
		\lesssim \sum_{Q\in \mathcal{W}_3} \sum_{\beta\leq \alpha} \int_{\frac{11}{10}Q} \ell(Q)^{-|\alpha-\beta|p} |D^\beta\mathring{P}^k_{Q^*}f (x)|^p dx \\
	& \lesssim \sum_{Q\in \mathcal{W}_3} \sum_{\beta\leq \alpha}  \ell(Q)^{-|\alpha-\beta|p} \norm{D^\beta\mathring{P}^k_{Q^*}f}_{L^p(Q^*)}^p
		 \lesssim \sum_{Q\in {\mathcal{W}_1}} \sum_{\beta\leq \alpha} {(1+\ell_0)^{(k-|\alpha|)p} }  \norm{\nabla^{|\alpha|} f}_{W^{k-|\alpha|,p}(Q)}^p\\
	& \lesssim \norm{\nabla^{|\alpha|} f}_{W^{k-|\alpha|,p}(\Omega)}^p,
\end{align*}
showing \rf{eqWeakDerivativeIntegrable}.

Now, this boundedness also implies that, given $\varphi\in C^\infty_c(\R^d)$ and $\beta\leq \alpha$, we get
$$\int_{\R^d} D^{\beta}\varphi (x)\sum_Q D^{\alpha-\beta} (\psi_Q \mathring P^k_{Q^*}f) (x)\,dx= \sum_Q \int_{\frac{11}{10}Q} D^{\beta}\varphi (x) D^{\alpha-\beta} (\psi_Q\mathring P^k_{Q^*}f)(x)\, dx$$
because the integral is absolutely convergent. Thus, 
\begin{align*}
(-1)^{|\alpha|} \langle \mathring \Lambda^k f,  D^\alpha \varphi \rangle
	& = (-1)^{|\alpha|} \int_{\R^d} D^{\alpha} \varphi \sum_Q (\psi_Q \mathring P^k_{Q^*}f)
		= (-1)^{|\alpha|} \sum_Q \int_{\frac{11}{10}Q} D^{\alpha}\varphi \, \psi_Q \mathring P^k_{Q^*}f\\
	& = \sum_Q \int_{\frac{11}{10}Q} \varphi D^{\alpha}(\psi_Q \mathring P^k_{Q^*}f) 
		=  \int_{\Omega} \varphi \sum_Q D^{\alpha}(\psi_Q \mathring P^k_{Q^*}f) 
\end{align*}
where all the integrals are taken with respect to the Lebesgue measure, and \rf{eqWeakDerivativeCoincides} follows.

 It remains to show that
$$\norm{D^\alpha \Lambda_k f}_{\dot{A}^\sigma_{p,q}(\R^n)}\leq C\norm{f}_{A^s_{p,q}(\Omega)}$$
for $|\alpha|=k$. Since 
\begin{align}\label{eqLambda0}
 D^\alpha \Lambda_k f 
\nonumber	& =  D^\alpha f  \chi_\Omega + \sum_{Q\in\mathcal{W}_3} D^\alpha (\psi_Q P^k_{Q^*} f)
		=   D^\alpha f  \chi_\Omega + \sum_{Q\in\mathcal{W}_3} \sum_{\beta \leq \alpha}{\alpha \choose \beta} D^{\alpha-\beta}\psi_Q D^\beta  P^k_{Q^*} f\\
	& 	=   \Lambda_0 (D^\alpha f) +\sum_{\beta < \alpha}{\alpha \choose \beta} \sum_{Q\in\mathcal{W}_3}  D^{\alpha-\beta}\psi_Q   D^\beta P^k_{Q^*} f.
\end{align}

Now, from \cite[Theorem 1.4]{PratsSaksman} we already have 
$$\norm{\Lambda_0 (D^\alpha f)}_{A^\sigma_{p,q}(\R^n)}\leq C \norm{D^\alpha f}_{A^\sigma_{p,q}(\Omega)}\leq C \norm{ f}_{A^s_{p,q}(\Omega)}$$
{(see the appendix for the case of unbounded $\Omega$ or $q=\infty$)}. 
Thus, for every $|\beta|<k$ we  need to control
\begin{align}\label{eqBreakBeta}
\squaredGreek{\beta}
\nonumber	& :=\norm{\sum_{P\in\mathcal{W}_3} D^{\alpha-\beta}\psi_P D^\beta  P^k_{P^*} f}_{\dot A^\sigma_{p,q}(\R^n)}^p\\
\nonumber	& =  \int_\Omega\left(\int_{\Omega^c} \frac{|\sum_{P\in\mathcal{W}_3} D^{\alpha-\beta}\psi_P(y) D^\beta  P^k_{P^*} f(y) |^q}{|x-y|^{\sigma q +d}} \, dy\right)^\frac pq dx\\
\nonumber	& \quad +  \int_{\Omega^c}\left(\int_{\Omega} \frac{|\sum_{P\in\mathcal{W}_3} D^{\alpha-\beta}\psi_P(x) D^\beta  P^k_{P^*} f(x) |^q}{|x-y|^{\sigma q +d}} \, dy\right)^\frac pq dx\\
\nonumber	& \quad + \int_{\Omega^c} \left(\int_{\Omega^c} \frac{|\sum_{P\in\mathcal{W}_3} \left((D^{\alpha-\beta}\psi_P D^\beta  P^k_{P^*} f)(x) - (D^{\alpha-\beta}\psi_P D^\beta  P^k_{P^*} f)(y)\right)|^q}{|x-y|^{\sigma q +d}} \, dy\right)^\frac pq dx \\
	& =\squaredGreek{\beta.a}+\squaredGreek{\beta.b}+\squaredGreek{\beta.c}.	
\end{align}
{In case $q=\infty$, the $L^q$ norm in $y$ is changed by an $L^\infty$ norm as usual.}

First we study the term  \squaredGreek{\beta.a}. Breaking the domain of integration into Whitney cubes, we need to use a slight variation of the covering: If $S$ has a neighbor in $\mathcal{W}_3$, we say that $S\in \mathcal{W}_3'$ and, in case $S\in \mathcal{W}_3'\setminus \mathcal{W}_3$, then we define $S^*$ to be the symmetrized of a convenient neighbor. We add and subtract the evaluation at $y$ of the approximating polynomial at the symmetrized cube $S^*$. We get that
\begin{align}\label{eqBreakBetaA}
\squaredGreek{\beta.a}
	& {{ =}} \sum_{Q\in \mathcal{W}_1} \int_Q \left( \sum_{S\in \mathcal{W}_3'} \int_{S} \frac{|\sum_{P\in \mathcal{W}_3:P\cap2S\neq \emptyset} D^{\alpha-\beta}\psi_P(y) D^\beta  P^k_{P^*} f(y) |^q}{|x-y|^{\sigma q +d}} \, dy\right)^\frac pq dx \\
\nonumber	& \lesssim \sum_{Q\in \mathcal{W}_1} \int_Q \left( \sum_{S\in \mathcal{W}_3'} \int_{S} \frac{|\sum_{P\in \mathcal{W}_3:P\cap2S\neq \emptyset} D^{\alpha-\beta}\psi_P(y) (D^\beta  P^k_{P^*} f(y) - D^\beta  P^k_{S^*} f(y) ) |^q}{|x-y|^{\sigma q +d}} \, dy\right)^\frac pq dx \\
\nonumber	& \quad {{+}}  \sum_{Q\in \mathcal{W}_1} \int_Q \left( \sum_{S\in \mathcal{W}_3'} \int_{S} \frac{|\sum_{P\in \mathcal{W}_3:P\cap2S\neq \emptyset} D^{\alpha-\beta}\psi_P(y) D^\beta  P^k_{S^*} f(y) |^q}{|x-y|^{\sigma q +d}} \, dy\right)^\frac pq dx = \squaredGreek{\beta.a.1}+\squaredGreek{\beta.a.2}
\end{align}

For the main part, \squaredGreek{\beta.a.1}, we take absolute values and we use that $|D^{\alpha-\beta}\psi_P(y)|\lesssim \ell(S)^{-|\alpha-\beta|}$. Moreover we develop the telescopic summation \rf{eqP3} along an admissible chain connecting $P^*$ and $S^*$:
\begin{align}
\squaredGreek{\beta.a.1}
\nonumber	& \lesssim \sum_{Q\in \mathcal{W}_1} \int_Q \left( \sum_{S\in \mathcal{W}_3'} \ell(S)^{-|\alpha-\beta|q}\sum_{P\cap2S\neq \emptyset}  \frac{\norm{D^\beta  P^k_{P^*} f - D^\beta  P^k_{S^*} f}_{L^q(S)}^q }{\Dist(Q,S)^{\sigma q +d}}\right)^\frac pq dx \\
\nonumber	& \lesssim \sum_{Q\in \mathcal{W}_1} \int_Q \left( \sum_{S\in \mathcal{W}_3'} \sum_{P\cap2S\neq \emptyset} \sum_{L\in [P^*,S^*]} \frac{\ell(S^*)^d \Dist(L,S^*)^{|\alpha-\beta|q}\norm{\nabla^k f - (\nabla^k f)_L}_{L^q(L)}^q.}{\ell(S)^{|\alpha-\beta|q} \ell(L)^d \Dist(Q,S)^{\sigma q +d}}\right)^\frac pq dx .
\end{align}
Note that since the cubes $2S\cap P\neq \emptyset$, they have comparable size and $\Dist(S^*,P^*)\approx \ell(S)$ by \rf{eqLongDistanceInvariant}. Thus, combining \rf{eqLengthDistance} and \rf{eqAdmissible1}, it is clear that all the elements $L\in [P^*,S^*]$ have comparable size and $\Dist(L,S^*)\approx\ell(S)$. Moreover, by \rf{eqLongDistanceInvariant}, it follows that $\Dist(Q,S)\approx \Dist(Q,S^*)\approx \Dist(Q,L)$, leaving
\begin{align}
\squaredGreek{\beta.a.1}
\nonumber	& \lesssim \sum_{Q\in \mathcal{W}_1} \int_Q \left( \sum_{S\in \mathcal{W}_3'} \sum_{P\cap2S\neq \emptyset} \sum_{L\in [P^*,S^*]} \frac{\norm{\nabla^k f - (\nabla^k f)_L}_{L^q(L)}^q.}{\Dist(Q,L)^{\sigma q +d}}\right)^\frac pq dx .
\end{align}
To complete the reduction, note that for every $L\in\mathcal{W}_1$ the number of candidates $S\in  \mathcal{W}_3'$ and $P\cap2S\neq \emptyset$ such that $L\in [S^*,P^*]$ is  uniformly bounded  by a dimensional constant. Therefore, we can use Lemma \ref{lemControlTotal} {below} to get
\begin{align}\label{eqBetaA1Bounded}
\squaredGreek{\beta.a.1}
	& \lesssim \sum_{Q\in \mathcal{W}_1}  \left(  \sum_{L\in\mathcal{W}_1} \frac{\norm{\nabla^k f - (\nabla^k f)_L}_{L^q(L)}^q.}{\Dist(Q,L)^{\sigma q +d}}\right)^\frac pq \ell(Q)^d \lesssim  C \norm{\nabla^k f}_{\dot A^\sigma_{p,q}(\Omega)}^p.
\end{align}
If $q=\infty$ the same can be obtained by trivial modifications.

On the other hand, we define $\mathcal{W}_4:=\{S\in \mathcal{W}_3: \mbox{ all the neighbors of $S$ are in $\mathcal{W}_3$}\}$.  {Given} $y\in S$ for $S\in \mathcal{W}_4$, we have that $\sum_{P\in \mathcal{W}_3}D^{\alpha-\beta}\psi_P (y) =0$. We get that
$$\squaredGreek{\beta.a.2}\leq 
\sum_{Q\in \mathcal{W}_1} \int_Q \left( \sum_{S\in \mathcal{W}_3'\setminus \mathcal{W}_4} \int_{S} \frac{|\sum_{P\cap2S\neq \emptyset} D^{\alpha-\beta}\psi_P(y) D^\beta  P^k_{S^*} f(y) |^q}{|x-y|^{\sigma q +d}} \, dy\right)^\frac pq dx.$$
{If} $S\in \mathcal{W}_3'\setminus \mathcal{W}_4$, then $\ell(S)\approx \ell_0$ and thus, $\norm{D^{\alpha-\beta}\psi_P}_\infty\approx \ell_0^{-|\alpha-\beta|}${. Summing} up, 
$$\squaredGreek{\beta.a.2}{ \lesssim} 
\sum_{Q\in \mathcal{W}_1} \left( \sum_{S\in \mathcal{W}_3'\setminus \mathcal{W}_4}  \frac{{  \ell_0^{-|\alpha-\beta| q+d}}\norm{D^\beta  P^k_{S^*} f}_{L^\infty(S)}^q}{\Dist(Q,S)^{\sigma q +d}} \right)^\frac pq  {\ell(Q)^d} .$$

{
In case $p\leq q\leq \infty$, the term can be controlled using the subadditivity of the sum, the equivalence of norms on polynomials and \rf{eqMaximalAllOver}:
$$\squaredGreek{\beta.a.2}\lesssim \ell_0^{-|\alpha-\beta|p+d \frac pq - d}
\sum_{Q\in \mathcal{W}_1}   \sum_{S\in \mathcal{W}_3'\setminus \mathcal{W}_4}  \frac{\norm{D^\beta  P^k_{S^*} f}_{L^p(S^*)}^p \ell(Q)^d}{\Dist(Q,S)^{\sigma p +d\frac pq}}\lesssim_{\ell_0} \sum_{S\in \mathcal{W}_3'\setminus \mathcal{W}_4}  \norm{D^\beta  P^k_{S^*} f}_{L^p(S^*)}^p.$$
}
For $S\in \mathcal{W}_3'\setminus\mathcal{W}_4$, { since $D^\beta  P^k_{S^*} f=  P^{k-|\beta|}_{S^*} D^\beta f$,} by \rf{eqP1Bis} we have
\begin{equation}\label{eqPolyBound}
\norm{D^\beta  P^k_{S^*} f}_{L^p(S^*)} \lesssim_{\ell_0} \norm{D^{\beta} f}_{W^{k-|\beta|,p}(S^*)} .
\end{equation}
Thus, 
\begin{equation}\label{eqBetaA2Bounded}
\squaredGreek{\beta.a.2}  \lesssim_{ \ell_0}  \norm{f}_{W^{k,p}(\Omega)}^p.
\end{equation}

{
On the other hand, in case $q<p<\infty$,  using the equivalence of norms on polynomials and \rf{eqPolyBound} we can write
$$\squaredGreek{\beta.a.2} \lesssim_{\ell_0} 
\sum_{Q\in \mathcal{W}_1} {\ell(Q)^d} \left( \sum_{S\in \mathcal{W}_1}  \norm{D^{\beta} f}_{W^{k-|\beta|,1}(S)}^q \frac{ \ell(S)^{\sigma q+d - dq } }{\Dist(Q,S)^{\sigma q +d}} \right)^\frac pq .$$
By Lemma \ref{lemNormA} below we get
\begin{align*}
\squaredGreek{\beta.a.2} 
	& \lesssim_{\ell_0} \norm{f}_{W^{k,p}(\Omega)}^p.
\end{align*}
}
By \rf{eqBreakBetaA}, \rf{eqBetaA1Bounded} and \rf{eqBetaA2Bounded}, we get
\begin{equation}\label{eqBetaABounded}
\squaredGreek{\beta.a}\lesssim \norm{f}_{A^s_{p,q}(\Omega)}^p.
\end{equation}

Next we repeat the argument for \squaredGreek{\beta.b}. This case is simpler, because we can use \rf{eqMaximalFar} to obtain
\begin{align*}
\squaredGreek{\beta.b}
	& =   \int_{\Omega^c}\left|\sum_{P\in\mathcal{W}_3} D^{\alpha-\beta}\psi_P(x) D^\beta  P^k_{P^*} f(x) \right|^p\left(\int_{\Omega} \frac{1}{|x-y|^{\sigma q +d}} \, dy\right)^\frac pq dx\\
	& \lesssim \sum_{Q\in \mathcal{W}_3'}  \ell(Q)^{-\sigma p} \int_{Q}\left|\sum_{P\in \mathcal{W}_3: P\cap 2Q\neq \emptyset} D^{\alpha-\beta}\psi_P(x) D^\beta  P^k_{P^*} f(x) \right|^p\,dx.
\end{align*}
{If $q=\infty$ we can obtain the same estimate.}

As before, we add and subtract a constant for every $x$ in the integration range to obtain
\begin{align}\label{eqBreakBetaB}
\squaredGreek{\beta.b}
	& \lesssim \sum_{Q\in \mathcal{W}_3'}  \ell(Q)^{-\sigma p-|\alpha-\beta|p}\sum_{P\in \mathcal{W}_3: P\cap 2Q\neq \emptyset}  \int_{Q}  \left|D^\beta  P^k_{P^*} f(x)-D^\beta  P^k_{Q^*} f(x)\right|^p\,dx \\
\nonumber	& \quad + \sum_{Q\in \mathcal{W}_3'\setminus \mathcal{W}_4}  \ell(Q)^{-\sigma p-|\alpha-\beta|p}\int_{Q}  \left|D^\beta  P^k_{Q^*} f(x) \right|^p\,dx 	
		= \squaredGreek{\beta.b.1}+ \squaredGreek{\beta.b.2}.
\end{align}

In the first term, we use again \rf{eqP3} and the fact that $\ell(P)\approx\ell(Q)\approx\ell(L) \approx \Dist(Q,L)$ for every $2Q\cap P\neq\emptyset$ and $L\in [Q^*,P^*]$:
\begin{align*}
\squaredGreek{\beta.b.1}
	& \lesssim \sum_{L\in \mathcal{W}_1}  \ell(L)^{-\sigma p}  \norm{\nabla^k f - (\nabla^k f)_L}_{L^p(L)}^p,
\end{align*}
Note that by Jensen's inequality we have that
\begin{align}\label{eqHisYokeIsEasy}
\sum_{L\in \mathcal{W}_1}  \ell(L)^{-\sigma p}  \norm{\nabla^k f - (\nabla^k f)_L}_{L^p(L)}^p 
	& = \sum_{L\in \mathcal{W}_1}  \int_L\left(\fint_L \frac{\nabla^k f(x) - \nabla^k f(\xi)}{\ell(L)^\sigma  } \, d\xi \right)^p dx	\\
\nonumber 	& \lesssim \sum_{L\in \mathcal{W}_1}  \int_L\left(\int_L \frac{|\nabla^k f(x) - \nabla^k f(\xi)|^q}{\ell(L)^{\sigma q+d}  } \, d\xi \right)^\frac pq dx\lesssim \norm{\nabla^k f}_{\dot A^\sigma_{p,q}(\Omega)}^p.
\end{align}
Thus,
\begin{equation}\label{eqBetaB1Bounded}
\squaredGreek{\beta.b.1}\lesssim \norm{\nabla^k f}_{\dot A^\sigma_{p,q}(\Omega)}^p.
\end{equation}

On the other hand,  we need to control the term
\begin{align*}
\squaredGreek{\beta.b.2}
	& \approx   \ell_0^{-\sigma p-|\alpha-\beta|p} \sum_{Q\in \mathcal{W}_3'\setminus \mathcal{W}_4} \norm{ D^\beta  P^k_{Q^*} f}_{L^p(Q)}^p {\lesssim_{\ell_0}}  \sum_{Q\in \mathcal{W}_1:\ell(Q)=\ell_0} \norm{ D^\beta  P^k_{Q} f}_{L^p(Q)}^p.
\end{align*}
By \rf{eqP1} we have that
\begin{align*}
\squaredGreek{\beta.b.2}
	& \lesssim_{\ell_0} \norm{f}_{W^{k,p}(\Omega)}^p.
\end{align*}
Combining this with \rf{eqBreakBetaB} and \rf{eqBetaB1Bounded}, we get
\begin{align}\label{eqBetaBBounded}
\squaredGreek{\beta.b}
	& \lesssim \norm{f}_{A^s_{p,q}(\Omega)}^p.
\end{align}

Finally  we need to deal with the term 
\begin{equation}\label{eqBetaC}
\squaredGreek{\beta.c}	= \int_{\Omega^c} \left(\int_{\Omega^c} \frac{|\sum_{P\in\mathcal{W}_3} \left((D^{\alpha-\beta}\psi_P D^\beta  P^k_{P^*} f)(x) -(D^{\alpha-\beta}\psi_P D^\beta  P^k_{P^*} f)(y)\right)|^q}{|x-y|^{\sigma q +d}} \, dy\right)^\frac pq dx .
\end{equation}
Here we will use the previous techniques but some additional tools have to be used to tackle the case $\dist(x,y) << \dist(x,\partial\Omega)$, so we separate the integration regions with this idea in mind. { We get}
\begin{align}\label{eqBreakBetaC}
\squaredGreek{\beta.c}	
\nonumber	& \leq \sum_{Q\in \mathcal{W}_4} \int_{Q} \left(\int_{B\left(x,\frac{\ell(Q)}{10}\right)} \frac{\left|\sum_{P} \left( (D^{\alpha-\beta}\psi_P D^\beta  P^k_{P^*}f)(x)-(D^{\alpha-\beta}\psi_P D^\beta  P^k_{P^*}f)(y)\right)\right|^q}{|x-y|^{\sigma q +d}} \, dy\right)^\frac pq dx\\
\nonumber	& \quad + \sum_{Q\in \mathcal{W}_2\setminus\mathcal{W}_4} \int_{Q} \left(\int_{B\left(x,\frac{\ell(Q)}{10}\right)}  \frac{\left|\sum_{P}  (D^{\alpha-\beta}\psi_P D^\beta  P^k_{P^*}f)(x)-(D^{\alpha-\beta}\psi_P D^\beta  P^k_{P^*}f)(y)\right|^q}{|x-y|^{\sigma q +d}} \, dy\right)^\frac pq dx\\
\nonumber	& \quad + \sum_{Q\in \mathcal{W}_2} \int_{Q} \left(\int_{\Omega^c\setminus B\left(x,\frac{\ell(Q)}{10}\right)}  \frac{\left|\sum_{P}  (D^{\alpha-\beta}\psi_P D^\beta  P^k_{P^*}f)(x)-(D^{\alpha-\beta}\psi_P D^\beta  P^k_{P^*}f)(y)\right|^q}{|x-y|^{\sigma q +d}} \, dy\right)^\frac pq dx\\
	& =: \squaredGreek{\beta.c.1}+\squaredGreek{\beta.c.2}+\squaredGreek{\beta.c.3}.
\end{align}

If $x\in Q \in \mathcal{W}_4$ and  $y\in  B(x,{\ell(Q)}/{10})$, then we can use the fact that  
$$\sum_{P\in\mathcal{W}_3} D^{\alpha-\beta}\psi_P(y) = \sum_{P\in\mathcal{W}_3} D^{\alpha-\beta}\psi_P(x) = 0,$$
and we can plug in constants that depend on $x$ or on $y$. We will bound the numerator of the first term in \rf{eqBreakBetaC} by
\begin{align}\label{eqBreakALot}
\nonumber& \left|\sum_{P\in\mathcal{W}_3} (D^{\alpha-\beta}\psi_P(x)-D^{\alpha-\beta}\psi_P(y)) (D^\beta  P^k_{P^*} f(x) )+ D^{\alpha-\beta}\psi_P(y) (D^\beta  P^k_{P^*} f(x) - D^\beta  P^k_{P^*} f(y)) \right| \\
		& \leq \left|\sum_{P\in\mathcal{W}_3} (D^{\alpha-\beta}\psi_P(x)-D^{\alpha-\beta}\psi_P(y)) (D^\beta  P^k_{P^*} f(x)-D^\beta  P^k_{Q^*} f(x))\right|\\
\nonumber & + \left|  \sum_{P\in\mathcal{W}_3} D^{\alpha-\beta}\psi_P(y) \left((D^\beta  P^k_{P^*} f-D^\beta  P^k_{Q^*} f)(x) - (D^\beta  P^k_{P^*} f-D^\beta  P^k_{Q^*} f)(y)\right) \right| .
\end{align}

\begin{align}\label{eqBreakBetaC2}
\squaredGreek{\beta.c.1}	
\nonumber	& \lesssim  \sum_{Q\in \mathcal{W}_4} \int_{Q} \left(\int_{B\left(x,\frac{\ell(Q)}{10}\right)} \sum_{\substack{P\in\mathcal{W}_3\\ P\cap 2Q \neq \emptyset}} \norm{\nabla D^{\alpha-\beta}\psi_P}_\infty^q \frac{\left|D^\beta  P^k_{P^*} f(x)-D^\beta  P^k_{Q^*} f(x)\right|^q}{|x-y|^{(\sigma-1) q +d}} \, dy\right)^\frac pq dx \\
\nonumber	& \quad + \sum_{Q\in \mathcal{W}_4} \int_{Q} \left(\int_{B\left(x,\frac{\ell(Q)}{10}\right)} \sum_{\substack{P\in\mathcal{W}_3\\ P\cap 2Q \neq \emptyset}} \norm{D^{\alpha-\beta}\psi_P}_\infty^q \frac{\norm{\nabla(D^\beta  P^k_{P^*} f-D^\beta  P^k_{Q^*} f)}_{L^\infty(P)}^q}{|x-y|^{(\sigma-1) q +d}} \, dy\right)^\frac pq dx \\
	& = \squaredGreek{\beta.c.1.1}+\squaredGreek{\beta.c.1.2} ,
\end{align}
{with the usual modifications when $q=\infty$.}

In the first term above, we  integrate on $y$, we use the control on the derivatives of the bump functions and we plug \rf{eqP3} in to get
\begin{align*}
\squaredGreek{\beta.c.1.1}	
	& \lesssim  \sum_{Q\in \mathcal{W}_4}  \sum_{P\in\mathcal{W}_3: P\cap 2Q \neq \emptyset} \ell(P)^{-(|\alpha-\beta|+1)p} \norm{D^\beta  P^k_{P^*} f-D^\beta  P^k_{Q^*} f}_{L^p(Q)}^p \ell(Q)^{(1-\sigma)p}  \\
	& \lesssim  \sum_{L\in \mathcal{W}_1} \ell(L)^{-(|\alpha-\beta|+1)p} \ell(L)^{(1-\sigma)p}  \frac{\ell(L)^d \ell(L)^{(|\alpha-\beta|)p}}{\ell(L)^d}\norm{\nabla^k f - (\nabla^k f)_L}_{L^p(L)} 
\end{align*}
so, by \rf{eqHisYokeIsEasy} we get
\begin{align}\label{eqBetaC11Bounded}
\squaredGreek{\beta.c.1.1}	
	& \lesssim  \sum_{L\in \mathcal{W}_1}  \ell(L)^{-\sigma p} \norm{\nabla^k f - (\nabla^k f)_L}_{L^p(L)} \lesssim  \norm{\nabla^k f}_{\dot A^\sigma_{p,q}(\Omega)}^p.
\end{align}

Note that the equivalence of norms of polynomials {implies}
$$\norm{\nabla(D^\beta  P^k_{P^*} f-D^\beta  P^k_{Q^*} f)}_{L^\infty(P)}^p\ell(P)^d \approx \norm{\nabla(D^\beta  P^k_{P^*} f-D^\beta  P^k_{Q^*} f)}_{L^p(P)}^p .$$ 
Thus, in the second term, using the same reasoning as above we get
\begin{align*}
\squaredGreek{\beta.c.1.2}	
	& \lesssim  \sum_{Q\in \mathcal{W}_4}  \sum_{P\in\mathcal{W}_3: P\cap 2Q \neq \emptyset} \ell(P)^{-|\alpha-\beta|p} \norm{\nabla(D^\beta  P^k_{P^*} f-D^\beta  P^k_{Q^*} f)}_{L^\infty(P)}^p \ell(Q)^{(1-\sigma)p+d}  \\
	& \lesssim  \sum_{L\in \mathcal{W}_1} \ell(L)^{-|\alpha-\beta| p} \frac{\ell(L)^d \ell(L)^{(|\alpha-\beta|-1)p}}{\ell(L)^d}\norm{\nabla^k f - (\nabla^k f)_L}_{L^p(L)}  \ell(L)^{(1-\sigma)p} 
\end{align*}
and we get the same case as before. By \rf{eqBreakBetaC2} and \rf{eqBetaC11Bounded} we get
\begin{equation}\label{eqBetaC1Bounded}
\squaredGreek{\beta.c.1} \lesssim  \norm{\nabla^k f}_{\dot A^\sigma_{p,q}(\Omega)}^p.
\end{equation}

{Next we} deal with the term \squaredGreek{\beta.c.2}. Whenever $x\in Q\in \mathcal{W}_2\setminus \mathcal{W}_4$ and $y\in \Omega^c \cap B(x,\ell(Q)/10)\subset \frac{11}{10}Q$, we bound the numerator in \rf{eqBreakBetaC} by the left-hand side of \rf{eqBreakALot} above:
\begin{align*}
\squaredGreek{\beta.c.2}	
\nonumber	& \lesssim  \sum_{Q\in \mathcal{W}_2\setminus\mathcal{W}_4} \int_{Q} \left(\int_{ \frac{11}{10}Q} \sum_{P\in\mathcal{W}_3: P\cap 2Q \neq \emptyset} \norm{\nabla D^{\alpha-\beta}\psi_P}_\infty^q \frac{\left|D^\beta  P^k_{P^*} f(x)\right|^q}{|x-y|^{(\sigma-1) q +d}} \, dy\right)^\frac pq dx \\
\nonumber	& \quad + \sum_{Q\in \mathcal{W}_2\setminus\mathcal{W}_4} \int_{Q} \left(\int_{ \frac{11}{10}Q} \sum_{P\in\mathcal{W}_3: P\cap 2Q \neq \emptyset} \norm{ D^{\alpha-\beta}\psi_P}_\infty^q \frac{\norm{\nabla D^\beta  P^k_{P^*} f}_{L^\infty\left(\frac{11}{10}Q\right)}^q}{|x-y|^{(\sigma-1) q +d}} \, dy\right)^\frac pq dx.
\end{align*}
{Note that only cubes $Q\in \mathcal{W}_3'$ have neighbors $P\in \mathcal{W}_3$.} Both terms are controlled by integrating on $y$ again and using the control on the derivatives of the bump functions together with \rf{eqP1Bis} and the finite overlapping of symmetrized cubes to get
\begin{align}\label{eqBetaC2Bounded}
\squaredGreek{\beta.c.2}	
\nonumber	& \lesssim  \sum_{Q\in \mathcal{W}_3'\setminus\mathcal{W}_4}  \sum_{P\in\mathcal{W}_3: P\cap 2Q \neq \emptyset}    (\ell_0^{-(|\alpha-\beta|+1)p}+\ell_0^{-(|\alpha-\beta|)p}) \norm{|D^\beta  P^k_{P^*} f|+|\nabla D^\beta  P^k_{P^*} f|}_{L^p(Q)}^p  \ell_0^{(1-\sigma)p}  \\
	& \approx \sum_{Q\in  \mathcal{W}_3'\setminus\mathcal{W}_4}  \norm{f}_{W^{k,p}(Q^*)}^p   \lesssim \norm{f}_{W^{k,p}(\Omega)}^p.
\end{align}
{If $q=\infty$, elementary modifications yield the same result.}

Let us consider the case $x\in Q \in \mathcal{W}_2$, $y\in \Omega^c\setminus B(x,\frac{\ell(Q)}{10})$. In this case we will bound the numerator in \rf{eqBreakBetaC} above by
\begin{equation}\label{eqFirstBound}
\left|\sum_{P\in\mathcal{W}_3} D^{\alpha-\beta}\psi_P(x) D^\beta  P^k_{P^*} f(x)\right| +  \left|\sum_{P\in\mathcal{W}_3}D^{\alpha-\beta}\psi_P(y) D^\beta  P^k_{P^*} f(y)\right|.
\end{equation}
We obtain
\begin{align*}
\squaredGreek{\beta.c.3}	
\nonumber	& \lesssim \sum_{Q\in \mathcal{W}_2} \int_{Q} \left(\sum_{S\in\mathcal{W}_3'}  \int_{S}\frac{\left| \sum_{P\in\mathcal{W}_3: P\cap 2S \neq \emptyset} D^{\alpha-\beta}\psi_P(y) D^\beta  P^k_{P^*} f(y) \right|^q}{\Dist(Q,S)^{\sigma q +d}} \, dy\right)^\frac pq dx \\
\nonumber	&  \quad +\sum_{Q\in \mathcal{W}_3'} \int_{Q} \left(\sum_{S\in\mathcal{W}_2}  \int_{S} \frac{\left|\sum_{P\in\mathcal{W}_3: P\cap 2Q \neq \emptyset} D^{\alpha-\beta}\psi_P(x) D^\beta  P^k_{P^*} f(x)\right|^q }{\Dist(Q,S)^{\sigma q +d}} \, dy\right)^\frac pq dx \\
	& = \squaredGreek{\beta.c.3.1}+\squaredGreek{\beta.c.3.2}.
\end{align*}
{Now, \squaredGreek{\beta.c.3.1} is bounded as \squaredGreek{\beta.a}, and \squaredGreek{\beta.c.3.2} is bounded as \squaredGreek{\beta.b} without much change.} Combining these estimates we obtain
\begin{equation}\label{eqBetaC3Bounded}
\squaredGreek{\beta.c.3} \lesssim \norm{f}_{A^s_{p,q}(\Omega)}^p.
\end{equation}

Combining \rf{eqBreakBetaC}, \rf{eqBetaC1Bounded},  \rf{eqBetaC2Bounded} and \rf{eqBetaC3Bounded} we have
\begin{equation*}
\squaredGreek{\beta.c} \lesssim \norm{f}_{A^s_{p,q}(\Omega)}^p, 
\end{equation*}
which combined with \rf{eqBreakBeta}, \rf{eqBetaABounded} and \rf{eqBetaBBounded}, leads to
\begin{equation*}
\squaredGreek{\beta} \lesssim \norm{f}_{A^s_{p,q}(\Omega)}^p
\end{equation*}
and the theorem follows.
\end{proof}

{
It remains to proof a couple of technical lemmata used during the proof of the  boundedness of the extension operator. }

\begin{lemma}\label{lemControlTotal}
Let  $d\geq 1$ be a natural number, let $0<\sigma<1$, let $1\leq p<\infty$, {and let  $1\leq q \leq \infty$} with $\sigma >\frac dp-\frac dq$. There exists a constant $C$ such that for every  $f\in L^p(\Omega)$, 
$$\sum_{Q\in \mathcal{W}_1 \cup \mathcal{W}_2}\left(\sum_{L\in \mathcal{W}_1} \int_L\frac{| f(y)-f_L|^q }{\Dist(Q,L)^{\sigma q+ d}} dy \right)^\frac pq \ell(Q)^d \leq C \norm{f}_{\dot A^\sigma_{p,q}(\Omega)}^p,$$
{with the usual modifications when $q=\infty$.}
\end{lemma}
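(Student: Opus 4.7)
The plan is to reduce the lemma to a local fractional Sobolev--Poincar\'e inequality on each Whitney cube $L$, and then handle the outer $Q$-sum by subadditivity when $p\leq q$ or by a weighted Jensen argument when $p>q$, combined in both cases with the non-local maximal estimate \rf{eqMaximalAllOver}.

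First I would establish the local bound
$$\left(\int_L |f(y)-f_L|^q\, dy\right)^{1/q} \lesssim \ell(L)^{\sigma - d/p + d/q}\left(\int_L g(y)^{p/q}\, dy\right)^{1/p}, \quad g(y) := \int_L \frac{|f(y)-f(z)|^q}{|y-z|^{\sigma q+d}}\, dz.$$
For $p\geq q$ this is immediate from the elementary Poincar\'e bound $|f(y)-f_L|^q \leq \fint_L |f(y)-f(z)|^q\, dz \lesssim \ell(L)^{\sigma q}g(y)$ combined with H\"older's inequality. For $p<q$ it is a genuine fractional Sobolev embedding on a cube, which I would prove via a standard pointwise Riesz-potential bound for $f-f_L$ together with the Hardy--Littlewood--Sobolev inequality; this is precisely where the hypothesis $\sigma>d/p-d/q$ is indispensable. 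Raising to the $p$th power,
$$I_L^{p/q} := \Bigl(\int_L |f-f_L|^q\Bigr)^{p/q} \lesssim \ell(L)^{\sigma p + dp/q - d}\int_L g^{p/q}.$$

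Next I would handle the outer sum over $Q$. When $p\leq q$ the map $t\mapsto t^{p/q}$ is subadditive, so after swapping the $Q$- and $L$-sums and applying \rf{eqMaximalAllOver} with $\eta = \sigma p + dp/q - d > 0$ (this positivity is exactly the Sobolev condition $\sigma > d/p-d/q$) one gets
$$\mathrm{LHS} \leq \sum_L I_L^{p/q} \sum_Q \frac{\ell(Q)^d}{\Dist(Q,L)^{\sigma p+dp/q}} \lesssim \sum_L \ell(L)^{d-\sigma p-dp/q}\, I_L^{p/q}.$$
Plugging in the local Sobolev--Poincar\'e estimate the $\ell(L)$ powers cancel exactly, yielding $\mathrm{LHS}\lesssim \sum_L \int_L g^{p/q} \leq \norm{f}_{\dot A^\sigma_{p,q}(\Omega)}^p$. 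When $p>q$ subadditivity fails, but a weighted Jensen argument recovers the same reduction: choosing weights $w_L := \ell(L)^d/\Dist(Q,L)^{d+\eta}$ for small $\eta>0$ and applying Jensen's inequality to the convex function $t^{p/q}$ gives
$$\Bigl(\sum_L a_L\Bigr)^{p/q} \leq \Bigl(\sum_L w_L\Bigr)^{p/q-1}\sum_L w_L^{1-p/q}a_L^{p/q},$$
and since $\sum_L w_L \lesssim \ell(Q)^{-\eta}$ by \rf{eqMaximalAllOver}, a swap of sums and a second application of a suitable variant of \rf{eqMaximalAllOver} to the resulting $Q$-sum reduces us to exactly the same bound, concluding identically.

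The main technical obstacle is the local fractional Sobolev--Poincar\'e estimate in the range $p<q$, where the Sobolev hypothesis $\sigma > d/p-d/q$ plays an essential role. The remainder of the argument is essentially a careful accounting of the Whitney geometry, using Lemma \ref{lemMaximal} as the main technical tool.
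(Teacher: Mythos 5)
For $p \le q$ your route matches the paper's in structure: a local $L^q$-to-$L^p$ estimate on each Whitney cube, followed by subadditivity of $t \mapsto t^{p/q}$ and \rf{eqMaximalAllOver}, with the exponents cancelling exactly. However, the local estimate in the range $p<q$ does not require a ``genuine fractional Sobolev embedding'' via Riesz potentials and Hardy--Littlewood--Sobolev; the paper obtains the same inequality simply by Minkowski's integral inequality to pass $f_L$ inside, then Jensen, and finally the trivial kernel comparison $|y-\xi|^{-\sigma q-d}\gtrsim \ell(L)^{-\sigma q-d}$ valid for $y,\xi\in L$. In particular the hypothesis $\sigma>d/p-d/q$ plays no role in the local step; it enters only through the positivity of $\eta=\sigma p+dp/q-d$ when invoking \rf{eqMaximalAllOver}, which you correctly identify in your displayed $Q$-sum estimate but then misattribute to the local bound.

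For $p>q$ there is a genuine gap. Your weighted Jensen with $w_L=\ell(L)^d/\Dist(Q,L)^{d+\eta}$ is fine algebraically, but after swapping sums the inner $Q$-sum becomes
\[
\sum_{Q}\frac{\ell(Q)^{d-\eta(p/q-1)}}{\Dist(Q,L)^{\,d-\eta(p/q-1)+\sigma p}},
\]
in which $\ell(Q)$ appears to a power strictly below $d$. The estimate \rf{eqMaximalAllOver} as stated does not cover this form, and no such variant can hold uniformly over the class of uniform domains: the boundary of a uniform domain can have Minkowski (or Hausdorff) dimension arbitrarily close to $d$ (snowflake-type or quasiball boundaries), in which case the abundance of small Whitney cubes near $\partial\Omega$ makes $\sum_Q \ell(Q)^{d-a}/\Dist(Q,L)^{c}$ divergent whenever $a>0$. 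The paper circumvents this by proving a separate duality estimate, Lemma \ref{lemNormA}, which reorganizes the sum along admissible chains through the central cubes $Q_L$, uses shadows and \rf{eqMaximalGuay}, and ultimately relies on boundedness of the Hardy--Littlewood maximal operator in $L^{p/\widetilde p}$ for an auxiliary $q<\widetilde p<p$. That chain/shadow machinery is exactly what removes any implicit dependence on the dimension of $\partial\Omega$, and it is the ingredient your proposal is missing; the claim that ``a second application of a suitable variant of \rf{eqMaximalAllOver}'' suffices is unsubstantiated and appears to be false in the generality required.
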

\begin{proof}
Let us write
$$\squared{A}:=\sum_{Q\in \mathcal{W}_1  \cup \mathcal{W}_2}\left(\sum_{L\in \mathcal{W}_1} \int_L\frac{| f(y)-f_L|^q }{\Dist(Q,L)^{\sigma q+ d}} dy \right)^\frac pq \ell(Q)^d.$$

Consider first the case $1\leq p=q<\infty$. In this case we can change  the order of summation, and it is enough to bound
$$\squared{A}=\sum_{L\in \mathcal{W}_1} \int_L | f(y)-f_L|^p dy \sum_{Q\in \mathcal{W}_1  \cup \mathcal{W}_2} \frac{\ell(Q)^d}{\Dist(Q,L)^{\sigma p+ d}} .$$
The sum in $Q$ is controlled by \rf{eqMaximalAllOver}, and using Jensen's inequality we obtain
$$\squared{A}\lesssim \sum_{L\in \mathcal{W}_1} \int_L | f(y)-f_L|^p dy \frac{1}{\ell(L)^{\sigma p}} \lesssim \sum_{L\in \mathcal{W}_1} \int_L \int_L \frac{| f(y)-f(\xi)|^p}{\ell(L)^{\sigma p+d}} \, d\xi \, dy\lesssim_d \norm{f}_{\dot A^\sigma_{p,p}(\Omega)}^p  .$$

The case $p<q<\infty$ is gotten by a slight modification, and using the fact that $x\mapsto x^{\frac pq}$ is sub-aditive. Indeed, note that given $y\in L\in \mathcal{W}_1$, we {obtain} that 
$$\squared{A}\leq \sum_{Q\in \mathcal{W}_1 \cup \mathcal{W}_2}\sum_{L\in \mathcal{W}_1} \left(\int_L\frac{| f(y)-f_L|^q }{\Dist(Q,L)^{\sigma q+ d}} dy \right)^\frac pq \ell(Q)^d.$$
Again we can change the order of summation. Now we will use that 
$$\sum_{Q\in\mathcal{W}_1\cup \mathcal{W}_2}\frac{\ell(Q)^d}{\Dist(Q,L)^{\sigma p + \frac{dp}{q}}}\lesssim \ell(L)^{d-\sigma p - \frac{dp}{q}}$$
by \rf{eqMaximalAllOver}, since $\sigma p + \frac{dp}{q}> d$ by assumption. 
On the other hand, using Minkowski and Jensen's inequalities we have that 
\begin{align*}
\left(\int_L| f(y)-f_L|^q \, dy \right)^\frac pq
	& \leq \left(\fint_L \left( \int_L | f(y)-f(\xi)|^q dy\right)^\frac1q d\xi \right)^p \leq \fint_L \left( \int_L | f(y)-f(\xi)|^q dy\right)^\frac pq d\xi  .
\end{align*}
Thus, 
$$\squared{A}\leq \sum_{L\in \mathcal{W}_1} \fint_L \left( \int_L | f(y)-f(\xi)|^q dy\right)^\frac pq d\xi \,  \ell(L)^{d-\sigma p - \frac{dp}{q}} = \sum_{L\in \mathcal{W}_1} \int_L \left( \int_L \frac{| f(y)-f(\xi)|^q}{\ell(L)^{\sigma q + d}} dy\right)^\frac pq d\xi ,$$
and the lemma follows. {The case $p<q=\infty$ is obtained by trivial modifications using $\sigma p>d$.}

For the case $p>q>1$ we will follow a longer reasoning, using Lemma \ref{lemNormA} below. As above, by Minkowski's inequality we get
\begin{align*}
\left(\int_L| f(y)-f_L|^q \, dy \right)^\frac1q
	& \leq \fint_L \left( \int_L | f(y)-f(\xi)|^q dy\right)^\frac1q d\xi \lesssim \int_L \left( \int_L \frac{| f(y)-f(\xi)|^q}{|{y}-\xi|^{\sigma q +d} }dy\right)^\frac1q \frac{ d\xi}{ \ell(L)^{d-\sigma -\frac dq}}.
\end{align*}
Thus, writing $D^\sigma_q f(\xi):=\left( \int_\Omega \frac{| f(y)-f(\xi)|^q}{|{y}-\xi|^{\sigma q +d} }dy\right)^\frac1q$, we get
\begin{align}\label{eqAStar}
\squared{A}
	& \lesssim \sum_{Q\in \mathcal{W}_1  \cup \mathcal{W}_2}\left( \sum_{L\in \mathcal{W}_1} \left(\int_L D^\sigma_q f (\xi)  d\xi\right)^q \left(\frac{ \ell(L)^{\sigma +\frac dq-d}}{\Dist(Q,L)^{\sigma + \frac dq}} \right)^q \right)^\frac pq \ell(Q)^d.
\end{align}

{By Lemma \ref{lemNormA}, we get
\begin{align*}
\squared{A}^\frac1p
	& \lesssim \norm{D^\sigma_q f}_{L^p(\Omega)}=\norm{f}_{\dot A^\sigma_{p,q}(\Omega)},
\end{align*}
and the lemma follows.} \end{proof}

{\begin{lemma}\label{lemNormA}
Let  $d\geq 1$ be a natural number, let $0<\sigma<1$, let $1\leq q<p<\infty$. There exists a constant $C$ such that for every  $h\in L^p(\Omega)$, 
$$\left(\sum_{Q\in \mathcal{W}_1 \cup  \mathcal{W}_2}\left( \sum_{L\in \mathcal{W}_1} \norm{h}_{L^1(L)}^q  \left(\frac{ \ell(L)^{\sigma +\frac dq-d}}{\Dist(Q,L)^{\sigma + \frac dq}} \right)^q \right)^\frac pq \ell(Q)^d \right)^\frac1p \lesssim \norm{h}_{L^p}.$$
\end{lemma}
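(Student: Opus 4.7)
The plan is to prove the estimate by duality combined with the Hardy--Littlewood maximal bounds collected in Lemma \ref{lemMaximal}. Write $g:=|h|$ and observe that $\norm{h}_{L^1(L)}^q=(\fint_L g)^q\ell(L)^{dq}$, so that after the obvious cancellation the inner $q$-sum in the statement equals
\[
B(Q):=\sum_{L\in\mathcal{W}_1}\Big(\fint_L g\Big)^q\frac{\ell(L)^{\sigma q+d}}{\Dist(Q,L)^{\sigma q+d}}.
\]
If $\mu$ denotes the measure on $\mathcal{W}_1\cup\mathcal{W}_2$ assigning weight $\ell(Q)^d$ to each $Q$, then the desired inequality is $\norm{B}_{L^{p/q}(\mu)}\lesssim\norm{h}_{L^p}^q$.

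Since $p/q>1$, duality reduces the problem to showing that for every nonnegative $\phi$ on $\mathcal W_1\cup\mathcal W_2$ with $\norm{\phi}_{L^r(\mu)}=1$, where $r=p/(p-q)$ is the Hölder conjugate of $p/q$, one has $\sum_{Q}B(Q)\phi(Q)\ell(Q)^d\lesssim\norm{h}_{L^p}^q$. Set $\Phi(x):=\sum_{Q\in\mathcal W_1\cup\mathcal W_2}\phi(Q)\chi_Q(x)$. Because the Whitney cubes in $\mathcal W_1\cup\mathcal W_2$ partition $\mathbb R^d$ up to a null set, $\norm{\Phi}_{L^r(\mathbb R^d)}=\norm{\phi}_{L^r(\mu)}=1$, and both $p,r$ exceed $1$ since $1\leq q<p<\infty$.

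Swap the order of summation and use $\phi(Q)\ell(Q)^d=\int_Q\Phi$. Splitting the sum in $Q$ according to whether $\Dist(Q,L)$ is larger or smaller than a fixed multiple of $\ell(L)$ and applying the second inequality in \eqref{eqMaximalFar} to the tail (the finitely many close cubes fit inside a bounded enlargement of $L$, so their contribution is also controlled by $\ell(L)^{-\sigma q}\inf_{y\in L}M\Phi(y)$), we obtain
\[
\sum_{Q}\frac{\phi(Q)\ell(Q)^d}{\Dist(Q,L)^{\sigma q+d}}\lesssim\frac{\inf_{y\in L}M\Phi(y)}{\ell(L)^{\sigma q}}.
\]
Combining this with $\fint_L g\leq Mg(y)$ for $y\in L$ yields
\[
\sum_{Q}B(Q)\phi(Q)\ell(Q)^d\lesssim\sum_{L\in\mathcal W_1}\ell(L)^d\Big(\fint_L g\Big)^q\inf_{y\in L}M\Phi(y)\leq\int_\Omega(Mg)^q\,M\Phi.
\]

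To close the estimate, apply Hölder's inequality with exponents $p/q$ and $r$, together with the boundedness of the Hardy--Littlewood maximal operator on $L^p$ and $L^r$ (both available because $p,r>1$):
\[
\int_\Omega(Mg)^q\,M\Phi\leq\norm{Mg}_{L^p}^q\norm{M\Phi}_{L^r}\lesssim\norm{g}_{L^p}^q\norm{\Phi}_{L^r}=\norm{h}_{L^p}^q.
\]
The main technical point is the passage from the sum in $Q$ to the maximal function of $\Phi$: this uses that the ``long distance'' $\Dist(Q,L)$ is bounded below by a multiple of $\max(\ell(Q),\ell(L))$ and that the Whitney families have finite overlap, which together prevent any artificial singularity coming from the close terms.
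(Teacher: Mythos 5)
Your argument is correct, and it takes a genuinely different—and simpler—route than the paper's.

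The paper dualizes the mixed-norm space $L^p_x(L^q_y)$: it views the left-hand side as $\norm{f}_{L^p_x(L^q_y)}$ for the doubly indexed kernel $f(x,y)=\sum_{Q,L}\chi_Q(x)\chi_L(y)\norm{h}_{L^1(L)}\ell(L)^{\sigma-d}\Dist(Q,L)^{-\sigma-d/q}$, pairs it against $g$ with $\norm{g}_{L^{p'}_x(L^{q'}_y)}\leq 1$, and then reorganizes the sum along admissible chains $[Q,L]$ via their central cube $R=Q_L$, replacing $\Dist(Q,L)$ by $\ell(R)$ and summing over $Q,L\in\SH(R)$. This requires the uniform-domain machinery (chains, shadows, symmetrized cubes) and a three-factor generalized H\"older with an auxiliary exponent $\widetilde{p}\in(q,p)$; it also forces the paper to treat $Q\in\mathcal{W}_2\setminus\mathcal{W}_3$ separately (with a further case split on whether $\Omega$ is bounded), since these cubes have no symmetrized partner and hence no chain. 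You instead dualize the scalar space $L^{p/q}(\mu)$ against $L^{r}(\mu)$ with $r=(p/q)'$, pass to the pointwise function $\Phi=\sum_Q\phi(Q)\chi_Q$, and reduce everything to the single pointwise estimate
$\sum_Q\phi(Q)\ell(Q)^d\Dist(Q,L)^{-\sigma q-d}\lesssim\ell(L)^{-\sigma q}\inf_L M\Phi$,
which follows purely from the Whitney geometry of $\mathcal{W}_1\cup\mathcal{W}_2$ (disjointness, $\Dist(Q,L)\gtrsim\max(\ell(Q),\ell(L))$, and the tiling of $\R^d$ up to the null set $\partial\Omega$) and the first maximal inequality \eqref{eqMaximalFar}. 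The closing step—H\"older in $L^{p/q}$--$L^{r}$ plus boundedness of $M$ on $L^p$ and $L^r$, both exponents $>1$ because $1\leq q<p$—is clean. Your route avoids chains and shadows entirely, handles $\mathcal{W}_1$ and $\mathcal{W}_2$ in a unified way, and in fact does not use uniformity of $\Omega$ at all, so it is both shorter and more general. One small inaccuracy in the write-up: the cubes $Q$ with $\Dist(Q,L)\lesssim\ell(L)$ need not be finite in number (there can be infinitely many small ones near $\partial\Omega$); what matters is that they are all contained in a fixed dilate of $L$ and $\Dist(Q,L)\approx\ell(L)$ for each of them, so their total contribution is still controlled by $\ell(L)^{-\sigma q}M\Phi(y)$ after integrating $\Phi$ over that dilate. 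This does not affect the validity of the proof.
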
}
\begin{proof}
We will use duality and the boundedness of the maximal operator in $L^{\frac{p}{\widetilde{p}}}$ for $q<\widetilde{p}<p$.  First consider $f(x,y) = \sum_{Q\in \mathcal{W}_1} \sum_{L\in \mathcal{W}_1} \chi_Q(x) \chi_L (y) \norm{h}_{L^1(L)} \frac{ \ell(L)^{\sigma -d}}{\Dist(Q,L)^{\sigma + \frac dq}}$. Then by duality we get
\begin{align*}
\squared{B}
	& := \norm{f}_{L^p_x(L^q_y)} = \sup_{\norm{g}_{L^{p'}_x\left(L^{q'}_y\right)} \leq 1}  \int_{\Omega} \int_\Omega f(x,y) g(x,y) \, dy\, dx\\
	& =  \sup_{\norm{g}_{L^{p'}_x\left(L^{q'}_y\right)} \leq 1} \sum_{Q\in \mathcal{W}_1} \sum_{L\in \mathcal{W}_1} \norm{h}_{L^1(L)} \frac{ \ell(L)^{\sigma -d}}{\Dist(Q,L)^{\sigma + \frac dq}}   \int_Q\int_L g(x,y) \, dy\, dx.
\end{align*}

Consider a function $g$ such that $\norm{g}_{L^{p'}_x\left(L^{q'}_y\right)}\leq 1$. In the sum above,  for every $Q$ and ${L}$ appearing in the sum we consider a chain $[Q,{L}]$ with central cube $R=Q_{{L}}$. Then, using \rf{eqAdmissible2} and reordering we get
\begin{align*}
\squared{B}_g
	& :=  \sum_{Q\in \mathcal{W}_1} \sum_{L\in \mathcal{W}_1} \norm{h}_{L^1(L)} \frac{ \ell(L)^{\sigma -d}}{\ell(Q_{{L}})^{\sigma + \frac dq}}   \int_Q\int_L g(x,y) \, dy\, dx \\
	& \leq \sum_{Q\in \mathcal{W}_1} \sum_{R: Q\in \SH(R)} \sum_{L\in \SH(R)} \norm{h}_{L^1(L)} \frac{ \ell(L)^{\sigma -d}}{\ell(R)^{\sigma + \frac dq}}   \int_Q\int_L g(x,y) \, dy\, dx \\
	& \leq \sum_{R\in \mathcal{W}_1}  \frac{ 1}{\ell(R)^{\sigma + \frac dq}} \sum_{Q\in \SH(R)} \int_Q \sum_{L\in \SH(R)} \norm{h}_{L^1(L)}  \ell(L)^{\sigma -d}  \left(\int_L g(x,y)^{q'} \, dy\right) ^\frac1{q'}   \ell(L)^\frac{d}{q} dx.
\end{align*}
Next we  apply the generalized H\"older inequality  to get
\begin{align*}
\squared{B}_g
	& \leq \sum_{R\in \mathcal{W}_1}  \frac{ 1}{\ell(R)^{\sigma + \frac dq}} \sum_{Q\in \SH(R)} 
	\int_Q \left(\sum_{L\in \SH(R)} \int_L g(x,y)^{q'} \, dy\right)^\frac1{q'} dx\\
	& \quad \left(\sum_{L\in \SH(R)} \left( \frac{\norm{ h}_{L^1(L)}}{\ell(L)^{d\left(1-\frac1{\widetilde{p}}\right)}}\right)^{\widetilde{p}}\right)^\frac1{\widetilde{p}}
	\left(\sum_{L\in \SH(R)} \left(\ell(L)^{\sigma+\frac{d}{q}-\frac{d}{\widetilde{p}}}\right)^\frac{\widetilde{p}q}{\widetilde{p}-q}\right)^\frac{\widetilde{p}-q}{\widetilde{p}q}.
\end{align*}

Let us bound the three terms. First, by increasing the domain of integration, we have that
$$\left(\sum_{L\in \SH(R)} \int_L g(x,y)^{q'} \, dy\right)^\frac1{q'} \leq \norm{g(x,\cdot)}_{L^{q'}(\Omega)} =: G(x).$$
Secondly, by the H\"older inequality and \rf{eqMaximalGuay} we get
$$\sum_{L\in \SH(R)} \left( \frac{\norm{ h}_{L^1(L)}}{\ell(L)^{d\left(1-\frac1{\widetilde{p}}\right)}}\right)^{\widetilde{p}}\leq \sum_{L\in \SH(R)} \int_L |h(x)|^{\widetilde{p}} \, dx \leq \ell(R)^d \inf_{\zeta \in R} M\left(|h|^{\widetilde{p}}\right)(\zeta).$$
Finally, by \rf{eqMaximalAllOver} we get
$$\left(\sum_{L\in \SH(R)} \left(\ell(L)^{\sigma+\frac{d}{q}-\frac{d}{\widetilde{p}}}\right)^\frac{\widetilde{p}q}{\widetilde{p}-q}\right)^\frac{\widetilde{p}-q}{\widetilde{p}q}
	= \left(\sum_{L\in \SH(R)} \left(\ell(L)^{\frac{\sigma\widetilde{p}q}{\widetilde{p}-q} +d}\right)\right)^\frac{\widetilde{p}-q}{\widetilde{p}q} 	
	\lesssim \ell(R)^{\sigma+\frac{d}{q}-\frac{d}{\widetilde{p}}} .$$
All together, we have gotten
\begin{align*}
\squared{B}_g
	& \leq \sum_{R\in \mathcal{W}_1}  \frac{ 1}{\ell(R)^{\sigma + \frac dq}} \sum_{Q\in \SH(R)} 
	\int_Q G(x) \, dx 
	\left( \ell(R)^d \inf_{\zeta \in R} M\left(|h|^{\widetilde{p}}\right)(\zeta)\right)^\frac1{\widetilde{p}}
	\ell(R)^{\sigma+\frac{d}{q}-\frac{d}{\widetilde{p}}}.
\end{align*}

Using \rf{eqMaximalGuay} again we get that $ \sum_{Q\in \SH(R)} \int_Q G(x) \, dx \lesssim \int_R MG(\zeta)\, d\zeta$ and, computing we get
\begin{align*}
\squared{B}_g
	& \leq \sum_{R\in \mathcal{W}_1}   \int_R MG(\zeta) \left( M\left(|h|^{\widetilde{p}}\right)(\zeta)\right)^\frac1{\widetilde{p}}\, d\zeta
		\leq \norm{ MG}_{L^{p'}(\Omega)}  \norm{ M\left(|h|^{\widetilde{p}}\right)}_{L^\frac{p}{\widetilde{p}}(\Omega)}^\frac1{\widetilde{p}}\\
	& \lesssim \norm{G}_{L^{p'}(\Omega)}  \norm{|h|^{\widetilde{p}}}_{L^\frac{p}{\widetilde{p}}(\Omega)}^\frac1{\widetilde{p}}
		=  \norm{g}_{L^{p'}_x(\Omega)(L^{q'}_y(\Omega))}  \norm{h}_{L^p(\Omega)} \leq   \norm{h}_{L^p(\Omega)}.
\end{align*}
{Finally the case $p>q=1$ works with trivial modifications, noting that $q'=\infty$.}

On the other hand, for $f(x,y) = \sum_{Q\in \mathcal{W}_2} \sum_{L\in \mathcal{W}_1} \chi_Q(x) \chi_L (y) \norm{h}_{L^1(L)} \frac{ \ell(L)^{\sigma -d}}{\Dist(Q,L)^{\sigma + \frac dq}}$, again by duality we get
\begin{align*}
\squared{C}
	& := \norm{f}_{L^p_x(L^q_y)}  =  \sup_{\norm{g}_{L^{p'}_x\left(L^{q'}_y\right)} \leq 1} \sum_{Q\in \mathcal{W}_2} \sum_{L\in \mathcal{W}_1} \norm{h}_{L^1(L)} \frac{ \ell(L)^{\sigma -d}}{\Dist(Q,L)^{\sigma + \frac dq}}   \int_Q\int_L g(x,y) \, dy\, dx.
\end{align*}

The estimate
$$\squared{C1}:=\sum_{Q\in \mathcal{W}_3} \sum_{L\in \mathcal{W}_1} \norm{h}_{L^1(L)} \frac{ \ell(L)^{\sigma -d}}{\Dist(Q,L)^{\sigma + \frac dq}}   \int_Q\int_L g(x,y) \, dy\, dx
 \leq C \norm{h}_{L^p(\Omega)}.$$
is obtained {changing $[Q,L]$ by $[Q^*,L]$ and $Q\in \SH(R)$ by $Q^*\in \SH(R)$ in the preceding proof. }

{If $\Omega$ is unbounded the same can be done for the sum in  $Q\in \mathcal{W}_2\setminus \mathcal{W}_3$, because $Q^*$ is always defined. If, instead, $\Omega$ is bounded, then  using $\Dist(Q,L)\geq \ell(Q)$ we get}
{
\begin{align*}
\squared{C2}
	& :=\sum_{Q\in\mathcal{W}_2\setminus \mathcal{W}_3} \sum_{L\in \mathcal{W}_1} \norm{h}_{L^1(L)} \frac{ \ell(L)^{\sigma -d}}{\Dist(Q,L)^{\sigma + \frac dq}}   \int_Q\int_L g(x,y) \, dy\, dx\\
	& \leq \sum_{Q\in \mathcal{W}_2\setminus\mathcal{W}_3}  \frac{ 1}{\ell(Q)^{\sigma + \frac dq}} 
	\int_Q \left(\sum_{L\in \mathcal{W}_1} \int_L g(x,y)^{q'} \, dy\right)^\frac1{q'} dx\\
	& \quad \left(\sum_{L\in \mathcal{W}_1} \left( \frac{\norm{ h}_{L^1(L)}}{\ell(L)^{d\left(1-\frac1{p}\right)}}\right)^{p}\right)^\frac1{p}
	\left(\sum_{L\in \mathcal{W}_1} \left(\ell(L)^{\sigma+\frac{d}{q}-\frac{d}{p}}\right)^\frac{pq}{p-q}\right)^\frac{p-q}{pq}\\
	& \leq \sum_{Q\in \mathcal{W}_2\setminus\mathcal{W}_3}  \frac{ 1}{\ell(Q)^{\sigma + \frac dq}} 
	\int_Q G(x) \, dx  \norm{ h}_{L^p(\Omega)} 
	\diam(\Omega)^{\sigma+\frac{d}{q}-\frac{d}{p}}\\
	& \leq \left(\sum_{Q\in \mathcal{W}_2\setminus\mathcal{W}_3}  \frac{ 1}{\ell(Q)^{\left(\sigma + \frac dq-\frac dp\right)p}} \right)^\frac1p
	\norm{ G}_{L^{p'}(\Omega^c)}  \norm{ h}_{L^p(\Omega)} 
	\diam(\Omega)^{\sigma+\frac{d}{q}-\frac{d}{p}}.
\end{align*}
}The last sum is a convergent geometric series because $\sigma + \frac dq -\frac dp >0$ by assumption and the number of cubes of a given size is uniformly bounded by a constant which depends on $\ell_0$, that is, on the uniformity constants and the diameter of the domain.

\end{proof}

\section{Proof of the main result}\label{secMain}
\begin{proof}[Proof of Corollary \ref{coroPS}]
From \cite[Theorem 1.2]{PratsSaksman} it follows that
$$\norm{f}_{F^s_{p,q}}\approx \norm{f}_{W^{k,\max\{p,q\}}}+ \left(\int_{\R^d} \left( \int_{\R^d} \frac{|\nabla^kf(x)-\nabla^kf(y)|^q}{|x-y|^{\sigma q+d}}\, dy\right)^\frac pq  dx\right)^\frac1p.$$

However, condition $f\in W^{k,\max\{p,q\}}$ is not necessary: Given a function $g\in L^1_{loc}$ and $q\geq p$, it follows that 
$$\norm{g}_{L^q(\R^d)}\lesssim \norm{g}_{L^p(\R^d)} + \left(\int_{\R^d} \left( \int_{\R^d} \frac{|g(x)-g(y)|^q}{|x-y|^{\sigma q+d}}\, dy\right)^\frac pq  dx\right)^\frac1p$$
(see \cite[estimate (3.8)]{PratsSaksman}, which is valid for any open set). 
\end{proof}

\begin{proof}[Proof of Theorem \ref{theoEquivalent}] 
By Corollary \ref{coroPS} and Theorem \ref{theoExtension} we get
$$\norm{f}_{F^s_{p,q}(\Omega)}= \inf_{g|_\Omega \equiv f} \norm{g}_{F^s_{p,q}(\R^d)}\approx \inf_{g|_\Omega \equiv f} \norm{g}_{A^s_{p,q}(\R^d)} \leq \norm{\Lambda^k f}_{A^s_{p,q}(\R^d)} \lesssim \norm{f}_{A^s_{p,q}(\Omega)}.$$

On the other hand, again by Corollary \ref{coroPS} we get the converse
$$ \norm{f}_{A^s_{p,q}(\Omega)}\leq \inf_{g|_\Omega \equiv f} \norm{g}_{A^s_{p,q}(\R^d)} \approx \inf_{g|_\Omega \equiv f} \norm{g}_{F^s_{p,q}(\R^d)}=\norm{f}_{F^s_{p,q}(\Omega)}.$$
\end{proof}

\appendix
{\section{Endpoint cases and unbounded domains}}

The extension theorem for $k=0$ in \cite[Theorem1.4]{PratsSaksman} is only proven for bounded domains and for $1<p,q<\infty$. The cases $p=1$ or $q=1$ the proof can be run with no changes at all. Below we detail how to adapt the proof of the extension theorem for $k=0$ including the endpoint $q=\infty$ and with $\Omega$ possibly unbounded:

\begin{theorem}\label{theoEndpoint}
Let $\Omega\subset \R^d$ be a uniform domain, $1\leq p<\infty$, $1\leq q\leq \infty$ and $0<s<1$ with $s>\frac dp-\frac dq$. Then $\Lambda_0$ as described in Section \ref{secExtension} is a bounded extension operator mapping $A^s_{p,q}(\Omega)$ to $F^s_{p,q}(\R^d)$.

In particular, $A^s_{p,q}(\Omega)=F^s_{p,q}(\Omega)$ in the sense of equivalent norms.
\end{theorem}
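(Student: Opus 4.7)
The plan is to run the $k=0$ specialisation of the proof of Theorem \ref{theoExtension}, checking that the modifications needed for $q=\infty$ and for unbounded $\Omega$ are superficial. Since $\mathbf{P}^0_{Q^*}f=(f)_{Q^*}$, the extension collapses to $\Lambda_0 f=f\chi_\Omega+\sum_{Q\in\mathcal{W}_3}\psi_Q (f)_{Q^*}$, and the bound $\norm{\Lambda_0 f}_{L^p(\R^d)}\lesssim \norm{f}_{L^p(\Omega)}$ follows from H\"older on each $Q^*$ together with the finite-overlap properties of $\{\tfrac{11}{10}Q\}_{Q\in\mathcal{W}_2}$ and of the symmetrized cubes (Lemma \ref{lemSymmetrized}); this part of the argument is insensitive both to $q$ and to $\diam(\Omega)$. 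Once the semi-norm estimate $\norm{\Lambda_0 f}_{\dot A^s_{p,q}(\R^d)}\lesssim \norm{f}_{A^s_{p,q}(\Omega)}$ is established, restriction together with Corollary \ref{coroPS} gives the claimed equivalence $A^s_{p,q}(\Omega)=F^s_{p,q}(\Omega)$.

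For the semi-norm estimate I would split $\R^d\times\R^d$ into the three regions $\Omega\times\overline{\Omega}^c$, $\overline{\Omega}^c\times\Omega$ and $\overline{\Omega}^c\times\overline{\Omega}^c$, producing the three pieces \squaredGreek{0.a}, \squaredGreek{0.b} and \squaredGreek{0.c} exactly as in the proof of Theorem \ref{theoExtension} with $\beta=0$. Each piece is controlled by inserting the intermediate constant $(f)_{S^*}$ on the side lying in $\overline{\Omega}^c$, telescoping along an admissible chain via the $k=0$ version of \rf{eqP3} --- which reduces to $|(f)_{Q_1}-(f)_{Q_2}|\lesssim \sum_{L\in[Q_1,Q_2]}\ell(L)^{-d}\norm{f-(f)_L}_{L^1(L)}$ --- and then combining Lemma \ref{lemMaximal} with Lemma \ref{lemControlTotal} (for $p\le q$) or Lemma \ref{lemNormA} (for $p>q$). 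The $q=\infty$ endpoint is absorbed by replacing every inner $L^q_y$ norm by an essential supremum; the hypothesis $s>d/p$, which is the $q=\infty$ instance of $s>d/p-d/q$, is precisely what keeps the Hardy-Littlewood tail $\int_{|x-y|>r}|x-y|^{-s-d}\,dy\lesssim r^{-s}$ under control, and both of the technical lemmas above are already stated and proved for $q\in[1,\infty]$.

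The main obstacle is the contribution from cubes $Q\in \mathcal{W}_2\setminus\mathcal{W}_3$, which appear only when $\Omega$ is unbounded and, a priori, possess no canonical symmetrized partner $Q^*$, blocking the telescoping step on the outer region. I would circumvent this by extending the symmetrization to all of $\mathcal{W}_2$: when $\Omega$ is unbounded, $\mathcal{W}_1$ contains Whitney cubes of every side-length, so for each $Q\in\mathcal{W}_2\setminus\mathcal{W}_3$ one can still select a cube $Q^*\in\mathcal{W}_1$ with $\ell(Q^*)=\ell(Q)$ and $\Dist(Q,Q^*)\lesssim\ell(Q)$, after which the telescoping argument goes through unchanged. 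This is the same mechanism used in the proof of Lemma \ref{lemNormA} to dispose of the term $\squared{C2}$ in the unbounded case, and completes the proof.
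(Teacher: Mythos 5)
Your overall strategy --- run the $k=0$ specialisation of the proof of Theorem \ref{theoExtension}, replace inner $L^q_y$ norms by essential suprema when $q=\infty$, and extend the symmetrization to cubes in $\mathcal{W}_2\setminus\mathcal{W}_3$ when $\Omega$ is unbounded --- is precisely the paper's approach. However, you have the logic of the symmetrization obstacle backwards, and this hides a genuine gap. Cubes in $\mathcal{W}_2\setminus\mathcal{W}_3$ (those with $\ell(Q)>\ell_0$) appear \emph{both} for bounded and for unbounded $\Omega$: the paper fixes $\ell_0$ finite in the definition of $\mathcal{W}_3$ even when $\Omega$ is unbounded, precisely so that \rf{eqLambda0} holds. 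When $\Omega$ is unbounded, these large cubes \emph{do} admit a symmetrized partner $Q^*\in\mathcal{W}_1$ of comparable size (since $\mathcal{W}_1$ has cubes of every scale), and this is what makes the unbounded case tractable. It is when $\Omega$ is \emph{bounded} that $\mathcal{W}_1$ has no cubes of size $\gg\diam\Omega$, so large $Q\in\mathcal{W}_2$ have no $Q^*$.

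Consequently your proposal does not address the case ``$\Omega$ bounded, $q=\infty$'', which is genuinely new and cannot be reduced to the cube-by-cube telescoping via $Q^*$. For such $Q$ one faces terms of the shape $\ell(Q)^d\sup_{y\in\Omega}|f(y)|^p/\dist(y,Q)^{sp}$, and $\sup_y|f(y)|$ is not controlled by $\norm{f}_{L^p}$. The paper handles this by the $Q_0$-trick: fix once and for all a cube $Q_0\in\mathcal{W}_1$, insert $f(x)$ for $x\in Q_0$ (instead of the unavailable $Q^*$), and average over $Q_0$; the resulting geometric sum $\sum_{Q:\,\nexists Q^*}\ell(Q)^{d-sp}$ converges because $sp>d$ and $\Omega$ is bounded. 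Your proposal, by declaring the unbounded case to be the sole obstruction and solving it by extending $Q^*$, leaves this case unproved. The rest of your reduction (the $k=0$ form of \rf{eqP3}, the use of Lemmas \ref{lemControlTotal} and \ref{lemNormA} according to whether $p\le q$ or $p>q$, and the observation that $p=1$ or $q=1$ require no changes) matches the paper and is sound.
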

\begin{proof}[Sketch of the proof]
Next we list the changes needed in the original proof in order to include the case of unbounded domains and the endpoint $q=\infty$. 

First of all, although we can take $\ell_0=\infty$ in \cite[Definition of $\Lambda_0$]{PratsSaksman} when $\Omega$ is unbounded, this would not satisfy \rf{eqLambda0}. Thus, we need to assume $\ell_0$ to be finite ($\ell_0=1$ would do the job).

The sketch of the proof is the same as in the original article. When $q=\infty$ we need to define
$$\circled{a}=\int_\Omega \sup_{y\in \Omega^c} \frac{|f(x)-\Lambda_0 f(y)|^p}{|x-y|^{sp}}\, dx, $$
$$\circled{b}=\int_{\Omega^c} \sup_{y\in \Omega} \frac{|\Lambda_0f(x)- f(y)|^p}{|x-y|^{sp}}\, dx,$$
and
$$\circled{c}=\int_{\Omega^c} \sup_{y\in \Omega^c} \frac{|\Lambda_0f(x)-\Lambda_0 f(y)|^p}{|x-y|^{sp}}\, dx.$$

We can bound $\circled{a}\lesssim \circled{a1}+\circled{a2}$ in the same way as in the original article.
In case $\Omega$ is unbounded, the term
 $$\circled{a1}= \sum_{Q\in\mathcal{W}_1}\int_Q  \left(\sum_{S\in \mathcal{W}_3} \frac{|f(x)-f_{S^*}|^q}{D(Q,S)^{sq+d}} \ell(S)^d  \right)^\frac pq \, dx $$
is bounded using the Jensen inequality without variation with respect to the original argument. If $q=\infty$, use that $ |f(x)-f_{S^*}|^p\leq  \fint_{S^*} |f(x)-f(\xi)|^p\, d\xi$ to obtain
$$\circled{a1}= \sum_{Q\in\mathcal{W}_1}\int_Q \sup_{S\in \mathcal{W}_3}  \frac{|f(x)-f_{S^*}|^p}{\Dist(Q,S)^{sp}} \,dx\lesssim \int_\Omega \sup_{S\in \mathcal{W}_1} \sup_{\xi\in S}  \frac{|f(x)-f(\xi)|^p}{|x-\xi|^{sp}}\,dx= \norm{f}_{\dot{A}^s_{p,\infty}(\Omega)}^p.$$
Here and in the remaining of the appendix, we write supremum for the essential supremum with respect to the Lebesgue measure.

To control 
$$\circled{a2}= \sum_{Q\in\mathcal{W}_1}\int_Q  \left(\sum_{S\in \mathcal{W}_2\setminus \mathcal{W}_4} \frac{|f(x)|^q}{D(Q,S)^{sq+d}} \ell(S)^d \right)^\frac pq \, dx$$ in the unbounded case, use that 
$$\sum_{S\in \mathcal{W}_2\setminus \mathcal{W}_4}\frac{\ell(S)^d}{\Dist(Q,S)^{sq+d}}\lesssim \int_{R^d} \frac{1}{(|x| + \ell_0)^{sq+d}}\, dx\approx \frac{1}{\ell_0^{sq}}$$
to obtain $\circled{a2}\lesssim \frac{\norm{f}_{L^p(\Omega)}^p}{\ell_0^{sp}}$.
An analogous inequality works whenever $q=\infty$.

In $\circled{b}$ a similar decomposition is used. The term
$$\circled{b1} = \sum_{Q\in \mathcal{W}_3}\ell(Q)^d \left(\sum_{S\in\mathcal{W}_1} \int_S \frac{|f_{Q^*} - f(y)|^q}{\Dist(Q,S)^{sq+d}}\, dy\right)^\frac pq  $$
is bounded without change both in the unbounded case and in case $q=\infty$, using the Minkowski inequality 
$$\norm{\sum_S\frac{\int_{Q^*}|f(\xi)-f(\cdot)|\,d\xi}{D(Q^*,S)^{s+d/q}}\chi_S}_{L^q}\leq \int_{Q^*} \norm{\sum_S\frac{|f(\xi)-f(\cdot)|}{D(Q^*,S)^{s+d/q}}\chi_S}_{L^q}\, d\xi$$
to obtain $\circled{b1}\lesssim\norm{f}_{\dot A^s_{p,q}(\Omega)}^p$.

The first term that needs a different approach is 
$$\circled{b2}=\sum_{Q\in \mathcal{W}_2\setminus \mathcal{W}_4} \ell(Q)^d\left(\int_\Omega \frac{|f(y)|^q}{\dist(y,Q)^{sq+d}}\, dy\right)^\frac pq $$
If $\Omega$ is unbounded, all cubes in $\mathcal{W}_2$ have a symmetrized cube, but if $\Omega$ is bounded this is not true. Thus, we  add an subtract $f(x)$ for $x\in Q^*$ or in a fixed $Q_0\in \mathcal{W}_1$ to get
\begin{align*}
\circled{b2}
	& \lesssim \sum_{Q\in \mathcal{W}_2\setminus \mathcal{W}_4: \exists Q^*} \int_{Q^*} \left( \int_\Omega \frac{|f(y)-f(x)|^q}{|x-y|^{sq+d}}\, dy\right)^\frac pq dx \\
	& \quad +\sum_{Q\in \mathcal{W}_2\setminus \mathcal{W}_4: \exists Q^*} \int_{Q^*} |f(x)|^pdx \left( \int_\Omega \frac{1}{\dist(y,Q)^{sq+d}}\, dy\right)^\frac pq \\
	& \quad + \sum_{Q\in \mathcal{W}_2\setminus \mathcal{W}_4: \nexists Q^*} \int_{Q_0} \left( \int_\Omega \frac{|f(y)-f(x)|^q}{\dist(y,Q)^{sq+d}}\, dy\right)^\frac pq dx \frac{\ell(Q)^d}{\ell(Q_0)^d} \\
	& \quad +\sum_{Q\in \mathcal{W}_2\setminus \mathcal{W}_4: \nexists Q^*} \int_{Q_0} |f(x)|^pdx \left( \int_\Omega \frac{1}{\dist(y,Q)^{sq+d}}\, dy\right)^\frac pq \frac{\ell(Q)^d}{\ell(Q_0)^d}= \circled{b3}+\circled{b4}+\circled{b5}+\circled{b6}.
	\end{align*}
The first term is controlled trivially by $\circled{b3}\lesssim \norm{f}_{\dot A^s_{p,q}(\Omega)}^p$. For the second, note that being $\ell(Q)\geq \ell_0$, we have that $\int_{\R^d} \frac{1}{\dist(y,Q)^{sq+d}}\, dy\lesssim \ell_0^{-sq}$, so $\circled{b4}\lesssim \norm{f}_{L^p(\Omega)}^p \ell_0^{-sp}$. If $q=\infty$ the same applies in both cases.

If $\Omega$ was unbounded, the last two terms, $\circled{b5}$ and $\circled{b6}$ would vanish. Thus, we can assume $\Omega$ to be bounded, and then we only need to deal with $q=\infty$. The first one is
$$\circled{b5}=\sum_{\substack{Q\in \mathcal{W}_2\setminus \mathcal{W}_4:\\ \nexists Q^*}} \int_{Q_0}  \sup_{y\in\Omega} \frac{|f(y)-f(x)|^p}{\dist(y,Q)^{sp}} dx \frac{\ell(Q)^d}{\ell(Q_0)^d}\leq \int_{Q_0}  \sup_{y\in\Omega} |f(y)-f(x)|^pdx \sum_{\substack{Q\in \mathcal{W}_2\setminus \mathcal{W}_4:\\ \nexists Q^*}} \frac{\ell(Q_0)^{-d}}{\ell(Q)^{sp-d}}. $$
The last sum is a convergent geometric sum as long as $\Omega$ is bounded. We get
$$\circled{b5}\lesssim \frac{\ell(Q_0)^{sp-d}}{\ell_0^{sp-d}} \norm{f}_{\dot A^s_{p,q}(\Omega)}^p.$$
To end, 
$$\circled{b6}=\sum_{Q\in \mathcal{W}_2\setminus \mathcal{W}_4: \nexists Q^*} \int_{Q_0} |f(x)|^p dx  \sup_{y\in\Omega} \frac{1}{\dist(y,Q)^{sp}} \frac{\ell(Q)^d}{\ell(Q_0)^d} = \int_{Q_0} |f(x)|^p dx  \sum_{Q\in \mathcal{W}_2\setminus \mathcal{W}_4: \nexists Q^*} \frac{\ell(Q_0)^{-d}}{\ell(Q)^{sp-d}}$$
and $\circled{b6} \lesssim \frac{\norm{f}_{L^p(\Omega)}^p}{\ell_0^{sp-d}}\ell(Q_0)^{-d} $.

The last term $\circled{c}$ can be obtained by means of all the techniques described above and we omit the details here.
\end{proof}

\begin{theorem}\label{theoShadow}
Let $\Omega$ be a uniform domain with an admissible Whitney covering $\mathcal{W}$, let $0<s<1$, $1\leq p <\infty$ and $1\leq q\leq \infty$. If $f\in L^1_{\rm loc} (\Omega)$ then
$$\norm{f}_{{\widetilde{A}}^s_{p,q}(\Omega)}:= \left( \sum_{Q\in\mathcal{W}} \int_Q \left(\int_{Sh(Q)} \frac{|f(x)-f(y)|^q}{|x-y|^{sq+d}}\, dy \right)^\frac pq dx\right)^\frac1p \approx \norm{f}_{\dot{A}^s_{p,q}(\Omega)}.$$
\end{theorem}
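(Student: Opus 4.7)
The easy direction $\norm{f}_{\widetilde{A}^s_{p,q}(\Omega)} \lesssim \norm{f}_{\dot{A}^s_{p,q}(\Omega)}$ is immediate: since $\Sh(Q) \subset \Omega$ and the Whitney cubes partition $\Omega$ up to a set of measure zero, restricting the inner integration to $\Sh(Q)$ only decreases the integrand, and summing over $Q$ reconstructs exactly the $\dot A$-norm up to an inequality. The whole content of the theorem is therefore in the converse $\norm{f}_{\dot{A}^s_{p,q}(\Omega)} \lesssim \norm{f}_{\widetilde{A}^s_{p,q}(\Omega)}$, which we obtain by adapting the self-improvement argument from \cite{PratsSaksman} to include the endpoint $q=\infty$ and unbounded $\Omega$.

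The plan is to split, for each $x \in Q \in \mathcal{W}$,
$$\int_\Omega \frac{|f(x)-f(y)|^q}{|x-y|^{sq+d}}\, dy=\int_{\Sh(Q)}+\int_{\Omega\setminus \Sh(Q)},$$
where the first piece already yields $\norm{f}_{\widetilde{A}^s_{p,q}(\Omega)}^p$ after taking the $p/q$-th power and summing over $Q$. For the far contribution, pick for every pair $Q,S\in \mathcal{W}$ with $S\notin \SH(Q)$ an $\varepsilon$-admissible chain $[Q,S]$ and let $R=R(Q,S)$ be its central cube. By \rf{eqAdmissible2} and Definition \ref{defShadow},
$$\ell(R)\approx \Dist(Q,S)\approx |x-y|,\qquad Q,S\in \SH(R),$$
and therefore $|x-\zeta|,\,|y-\zeta|\lesssim \ell(R)\approx |x-y|$ for every $\zeta\in R$.

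Writing $f(x)-f(y)=(f(x)-f(\zeta))+(f(\zeta)-f(y))$, averaging in $\zeta\in R$, raising to the $q$-th power and applying Jensen's inequality (with the usual $L^\infty$ modification when $q=\infty$) produces the pointwise bound
$$\frac{|f(x)-f(y)|^q}{|x-y|^{sq+d}}\lesssim \fint_R \frac{|f(x)-f(\zeta)|^q}{|x-\zeta|^{sq+d}}\, d\zeta+ \fint_R \frac{|f(\zeta)-f(y)|^q}{|\zeta-y|^{sq+d}}\, d\zeta.$$
Integrating over $y\in S$, regrouping the sum $\sum_{S\notin \SH(Q)}$ by the value of $R$, and using that $\sum_{S:R(Q,S)=R}\ell(S)^d\lesssim \sum_{S\in \SH(R)}\ell(S)^d\lesssim \ell(R)^d$ by \rf{eqMaximalAllOver} to cancel the $1/|R|$ in the mean, one arrives at
$$\sum_{S\notin \SH(Q)} \int_S \frac{|f(x)-f(y)|^q}{|x-y|^{sq+d}}\, dy \lesssim \sum_{R:\, Q\in \SH(R)} \left(\int_R \frac{|f(x)-f(\zeta)|^q}{|x-\zeta|^{sq+d}}\, d\zeta\ +\ \fint_R \int_{\Sh(R)}\frac{|f(\zeta)-f(y)|^q}{|\zeta-y|^{sq+d}}\, dy\, d\zeta\right).$$

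It remains to take the $p/q$-th power, integrate in $x\in Q$ and sum over $Q\in \mathcal{W}$. For $p=q$ this is a trivial reordering after Fubini. For $p<q$ the subadditivity of $t\mapsto t^{p/q}$ combined with Minkowski's inequality transfers the sum inside, and then the shadow summability \rf{eqAscendingToGlory}--\rf{eqAscendingPath} of Remark \ref{remInTheShadow} yields the claim. The regime $q<p$ is the principal obstacle: passing the sum outside of the convex power $(\,\cdot\,)^{p/q}$ cannot be done pointwise, and one must use a duality argument entirely parallel to the one in Lemma \ref{lemNormA}, exploiting \rf{eqAscendingToGlory} to absorb the $\ell(R)^{-s}$-weights arising from the central-cube reorganization so that the estimate closes on $\widetilde A^p$ rather than on the ambient $\dot A^p$ (which would be circular). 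The endpoint $q=\infty$ requires only cosmetic modifications: the $L^q$-averages are replaced by essential suprema at every step, and Jensen's inequality becomes trivial.
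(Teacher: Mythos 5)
Your easy direction and the central-cube pointwise decomposition are fine, and for $p=q$ the regrouping you describe does close cleanly by Fubini: after interchanging the sums, the piece $\sum_{R:Q\in\SH(R)}\int_R\frac{|f(x)-f(\zeta)|^q}{|x-\zeta|^{sq+d}}\,d\zeta$ becomes $\sum_R\int_R\int_{\Sh(R)}\frac{|f(x)-f(\zeta)|^q}{|x-\zeta|^{sq+d}}\,dx\,d\zeta$, which is exactly the $\widetilde A$ norm. However, the argument as stated has genuine gaps for $p\neq q$, and in particular the endpoint you declare to be ``cosmetic'' is precisely where the paper devotes a separate, structurally different proof.

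First, for $p<q$: you invoke subadditivity of $t\mapsto t^{p/q}$ together with ``Minkowski's inequality'' to transfer the sum. Subadditivity lets you push $(\cdot)^{p/q}$ inside the sum over $R$, but the remaining step — controlling $\int_{\Sh(R)}\bigl(\int_R g(x,\zeta)\,d\zeta\bigr)^{p/q}dx$ by $\int_R\bigl(\int_{\Sh(R)}g(x,\zeta)\,dx\bigr)^{p/q}d\zeta$ — is not a Minkowski inequality: the exponent $p/q<1$ makes the integral Minkowski inequality run in the \emph{opposite} direction, so the interchange does not come for free. The paper sidesteps this by proving the $1<p,q<\infty$ case once and for all via a duality argument in \cite{PratsSaksman}, which is not what you are describing.

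Second, and more importantly, the case $q=\infty$ (which is the only genuinely new content of Theorem~\ref{theoShadow} compared to \cite{PratsSaksman}, and the reason the theorem sits in the appendix at all) is not a cosmetic variant. The paper explicitly records that the duality argument ``cannot be used'' when $q=\infty$ and replaces it by a three-term decomposition $f(x)-f(y)=(f(x)-f_Q)+(f_Q-f_{Q_S})+(f_{Q_S}-f(y))$ with a telescoping sum along the subchain $[Q,Q_S]$. That telescoping is essential: it is what produces, via H\"older and \rf{eqAscendingPath}, the decay weights $\ell(P)^{-sp/2}\ell(Q_S)^{sp/2}$ that make the shadow sums converge when one passes from a supremum over intermediate cubes to a sum. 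In your two-term central-cube split, after replacing the $L^q$ integral by a supremum the regrouped far contribution takes the form $\sup_{R:\,Q\in\SH(R)}[\,\cdots\,]$, and there is no weight in $R$ available to dominate that supremum by a sum: $\sum_{R:\,Q\in\SH(R)}1$ diverges, and \rf{eqAscendingToGlory} only helps when a negative power of $\ell(R)$ is present, which your bookkeeping has already cancelled against $|R|$. So the $q=\infty$ branch of your argument does not close as written, and the fix is not superficial — it is exactly the chain-telescoping machinery the paper introduces for this case.
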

\begin{proof}
The case $1<p,q<\infty$ is essentially contained \cite[Lemma 4.1, Remark 4.2]{PratsSaksman}, with $\Omega$ bounded, since the restrictions $f\in L^p$, $s>\frac dp-\frac dq$ and the boundedness of $\Omega$ are never used along that proof. The same proof can be applied in the case $1\leq p,q <\infty$  with the usual modifications when the dual exponents are not finite.  It remains to show the case $q=\infty$, $1\leq p <\infty$, when the duality argument cannot be used.

In this case, 
$$\norm{f}_{\dot A^s_{p,q}(\Omega)}^p = \int_\Omega \sup_{y\in \Omega} \frac{|f(x)-f(y)|^p}{|x-y|^{sp}}\, dx$$
and 
$$\norm{f}_{\widetilde A^s_{p,q}(\Omega)}^p =\sum_{Q\in\mathcal{W}} \int_Q  \sup_{y\in \Sh(Q)} \frac{|f(x)-f(y)|^p}{|x-y|^{sp}}\, dx .$$
The fact that $\norm{f}_{\widetilde A^s_{p,q}(\Omega)}^p \leq \norm{f}_{\dot A^s_{p,q}(\Omega)}^p$ is trivial. To see the converse, it suffices to show that
$$\sum_{Q\in\mathcal{W}} \int_Q  \sup_{S\in \mathcal{W}} \sup_{y \in S} \frac{|f(x)-f(y)|^p}{\Dist(Q,S)^{sp}}\, dx \lesssim  \norm{f}_{\widetilde A^s_{p,q}(\Omega)}^p.$$
Using as usual admissible chains joining ths cubes in consideration and the triangle inequality it is enough to show that  
$$\circled{1}:=\sum_{Q\in \mathcal{W}} \int_Q \sup_{S\in \mathcal{W}} \frac{|f(x)-f_Q|^p}{\Dist(Q,S)^{sp}}\, dx\lesssim  \norm{f}_{\widetilde A^s_{p,q}(\Omega)}^p,$$
$$\circled{2}:=\sum_{Q\in \mathcal{W}} \ell(Q)^d \sup_{S\in \mathcal{W}} \frac{|f_Q-f_{Q_S}|^p}{\Dist(Q,S)^{sp}}\lesssim  \norm{f}_{\widetilde A^s_{p,q}(\Omega)}^p,$$
and
$$\circled{3}:=\sum_{Q\in \mathcal{W}} \ell(Q)^d \sup_{S\in \mathcal{W}} \sup_{y \in S} \frac{|f_{Q_S}-f(y)|^p}{\Dist(Q,S)^{sp}}\lesssim  \norm{f}_{\widetilde A^s_{p,q}(\Omega)}^p.$$

The first sum can be bounded almost immediately. Indeed, $|f(x)-f_Q|\leq \sup_{\xi\in Q} |f(x)-f(\xi)|$ and $\sup_{S}\frac{1}{\Dist(Q,S)^{sp}}\leq \ell(Q)^{-sp}$, so
$$\circled{1}\lesssim \sum_Q \int_Q  \sup_{\xi\in Q} \frac{|f(x)-f(\xi)|^p}{|x-\xi|^{sp}}\, dx\leq  \norm{f}_{\widetilde A^s_{p,q}(\Omega)}^p.$$

In the second case, we use the subchain  $[Q,Q_S]\subset [Q,S]$ to write
$$\circled{2}\leq \sum_Q  \sup_{S} \left| \sum_{P\in [Q,Q_S)}(f_P-f_{\mathcal{N}(P)})\right|^p \frac{\ell(Q)^d}{\Dist(Q,S)^{sp}},$$
where $\mathcal{N}(P)$ stands for `the next cube in the chain'. Using the H\"older inequality,  \rf{eqAscendingPath} and \rf{eqAdmissible2} we get
\begin{align*}
\circled{2}
	& \lesssim \sum_Q  \sup_{S} \sum_{P\in [Q,Q_S)} |f_P-f_{\mathcal{N}(P)}|^p \ell(P)^{-s p/2} \ell(Q_S)^{s p/2} \frac{\ell(Q)^d}{\Dist(Q,S)^{sp}}\\
	& \lesssim \sum_Q \sup_{R: Q\in \SH(R)} \sup_{S\in \SH(R)} \sum_{P\in \SH(R): Q\in \SH(P)} \fint_P\fint_{5P}|f(\xi)-f(\zeta)|^p\, d\zeta \, d\xi \frac{\ell(R)^{s p/2} \ell(Q)^d}{\ell(P)^{s p/2} \ell(R)^{sp}}.
\end{align*}
Note that the dependence on $S$ has vanished. Next we change the supremum in $R$ by a sum, and change the order of summation to find
$$\circled{2}\lesssim \sum_{P\in \mathcal{W}} \fint_P\fint_{5P} \frac{|f(\xi)-f(\zeta)|^p}{ \ell(P)^{s p/2}}\, d\zeta \, d\xi   \sum_{R: P\in \SH(P)}   \frac{1}{\ell(R)^{sp/2}}\sum_{Q\in \SH(R)}\ell(Q)^d .$$
By \rf{eqAscendingToGlory} and \rf{eqMaximalAllOver} we obtain 
$$\circled{2}\lesssim \sum_{P\in \mathcal{W}} \fint_P\fint_{5P} \frac{|f(\xi)-f(\zeta)|^p}{ \ell(P)^{s p/2}} \, d\zeta \, d\xi \frac{1}{\ell(P)^{sp/2}} \ell(P)^d \lesssim \sum_{P\in \mathcal{W}} \int_P\sup_{\zeta\in 5P}\frac{|f(\xi)-f(\zeta)|^p}{|\xi-\zeta|^{sp}} \, d\xi, $$
so $\circled{2}\lesssim \norm{f}_{\widetilde A^s_{p,q}(\Omega)}^p$ as claimed. 

To end we  control the last term by similar arguments but with no need to use the chain. In this case, 
$$\circled{3}\leq \sum_Q  \ell(Q)^d \sup_{R: Q\in \SH(R)} \sup_{y \in \SH(R)} \frac{|f_{R}-f(y)|^p}{\ell(R)^{sp}} \leq \sum_{R\in\mathcal{W}}   \sup_{y \in \SH(R)} \frac{|f_{R}-f(y)|^p}{\ell(R)^{sp}} \sum_{Q\in\SH(R)}  \ell(Q)^d,$$
and using \rf{eqMaximalAllOver} and Jensen's inequality, we get
$$\circled{3}\leq \sum_{R\in\mathcal{W}}   \sup_{y \in \SH(R)} \frac{\int_R |f(\xi)-f(y)|^p \,d\xi }{\ell(R)^{sp}} \lesssim \sum_{R\in\mathcal{W}}  \int_R  \sup_{y \in \SH(R)} \frac{ |f(\xi)-f(y)|^p }{|y-\xi|^{sp}} \,d\xi = \norm{f}_{\widetilde A^s_{p,q}(\Omega)}^p.$$
\end{proof}

\begin{theorem}\label{theoBall}
Let $\Omega$ be a uniform domain with an admissible Whitney covering $\mathcal{W}$, let $0<s<1$, $1\leq q\leq p <\infty$. If $f\in L^1_{\rm loc}(\Omega)$ then
$$\left( \sum_{Q\in\mathcal{W}} \int_Q \left(\int_{5Q} \frac{|f(x)-f(y)|^q}{|x-y|^{sq+d}}\, dy \right)^\frac pq dx\right)^\frac1p \approx \norm{f}_{\dot{A}^s_{p,q}(\Omega)}.$$
\end{theorem}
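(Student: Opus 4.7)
\emph{Approach sketch.} The inequality in which the left-hand side is dominated by $\norm{f}_{\dot A^s_{p,q}(\Omega)}$ is immediate: the Whitney cubes are pairwise disjoint and $5Q\cap\Omega\subset\Omega$, so the sum collapses to an integral over $\Omega$ inside an integral over $\Omega$, giving precisely the $\dot A^s_{p,q}$ norm.

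For the reverse inequality, by Theorem \ref{theoShadow} it suffices to dominate $\norm{f}_{\widetilde A^s_{p,q}(\Omega)}$ by the Whitney-ball quantity on the left-hand side. The plan mirrors the proof of the case $q=\infty$ of Theorem \ref{theoShadow}: for $x\in Q\in\mathcal{W}$ and $y\in S\in\SH(Q)$ I would fix an admissible chain $[Q,S]=(Q_1,\dots,Q_M)$ with central cube $Q_S$, so that $|x-y|\approx\ell(Q_S)$ by \rf{eqAdmissible2}, and telescope
$$f(x)-f(y)=(f(x)-f_Q)+\sum_{i=1}^{M-1}(f_{Q_i}-f_{Q_{i+1}})+(f_S-f(y)).$$
Raising to the $q$-th power using a weighted Hölder inequality with weights $\ell(Q_i)^\alpha$ for an appropriate $\alpha>0$, and using \rf{eqAscendingPath} to control $\sum_i\ell(Q_i)^\alpha\lesssim\ell(Q_S)^\alpha$ along the chain, reduces the analysis to three contributions analogous to $\circled{1}$, $\circled{2}$, $\circled{3}$ in the proof of Theorem \ref{theoShadow}.

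Each of these contributions is then matched against the Whitney-ball quantity via two ingredients. First, by \rf{eqWhitney5} neighbouring Whitney cubes satisfy $Q_{i+1}\subset 5Q_i$, so Jensen's inequality yields $|f_{Q_i}-f_{Q_{i+1}}|^q\lesssim\fint_{Q_i}\fint_{5Q_i}|f(\xi)-f(\zeta)|^q\,d\zeta\,d\xi$, and the analogous estimate applies to $|f(x)-f_Q|^q$ and $|f_S-f(y)|^q$. Second, after changing the order of summation the multiplicity with which a neighbouring pair $(Q_i,Q_{i+1})$ appears in a chain $[Q,S]$ is controlled through the shadow estimate \rf{eqAscendingToGlory}, by splitting the chain at its central cube (so that either $Q\in\SH(Q_i)$ or $S\in\SH(Q_i)$, and the sum over the remaining endpoint is handled by \rf{eqMaximalAllOver}).

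Finally, to pass from the pointwise/summed bound to the full $L^{p/q}$ level one invokes Minkowski's inequality on the discrete sum over $S\in\SH(Q)$; this step is the only place where $p/q\geq 1$ is used, which explains the standing hypothesis $q\leq p$ in the statement. The main obstacle will be the bookkeeping in the change of order of summation, in particular tuning the weight exponent $\alpha$ so that the geometric factor $\ell(Q_S)^{\alpha(q-1)-sq-d}$ together with the multiplicity bounds from \rf{eqAscendingToGlory} produces a convergent sum uniformly in the chain; once this is done, the resulting estimate collapses to $\sum_{P\in\mathcal{W}}\int_P(\int_{5P}|f(\xi)-f(\zeta)|^q|\xi-\zeta|^{-sq-d}\,d\zeta)^{p/q}\,d\xi$, which is exactly the target.
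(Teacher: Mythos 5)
Your ``immediate'' direction is right (Whitney cubes are disjoint and $5Q\subset\Omega$ once $C_\mathcal{W}$ is large enough), and your plan for the reverse inequality is a genuinely different route from the one the paper takes. The paper's proof of Theorem~\ref{theoBall} is essentially a one-liner: it observes that the duality argument of \cite[Lemma 4.1, Remark 4.2]{PratsSaksman} (the same one used for Theorem~\ref{theoShadow} when $1<p,q<\infty$) never uses $f\in L^p$ or boundedness of $\Omega$, and that the case $q=1$ follows by replacing $L^{q'}$ by $L^\infty$ in that duality. You instead propose a self-contained chain-telescoping argument modeled on the paper's proof of the $q=\infty$ endpoint of Theorem~\ref{theoShadow}. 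That trade-off is reasonable: the paper's route is shorter but leans on an external lemma, yours is elementary but requires all the bookkeeping to be done by hand.

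There is, however, a specific gap you should address rather than defer to ``bookkeeping.'' The $q=\infty$ proof of Theorem~\ref{theoShadow} that you take as your template splits $f(x)-f(y)$ at the \emph{central} cube $Q_S$, and its term $\circled{3}$ (containing $|f_{Q_S}-f(y)|$) is bounded by the \emph{shadow} quantity $\norm{f}_{\widetilde A^s_{p,q}}$, \emph{not} by the Whitney-ball quantity. Indeed, it must be so: for $q=\infty>p$ the ball characterization is false, so the argument there cannot collapse to balls. You avoid this by telescoping all the way to $S$, so your final term $f_S-f(y)$ with $y\in S$ is local and ball-controllable; that is the right fix. But then the middle sum runs over the \emph{entire} chain $[Q,S]$ (ascending to $Q_S$ and descending to $S$), and the weighted Hölder estimate together with \rf{eqAscendingPath} on both halves produces, after changing the order of summation, a double sum in which the multiplicity of a fixed link $(Q_i,Q_{i+1})$ must be counted against both endpoints simultaneously. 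This is precisely where $q\le p$ must enter in an essential way — the claim that it appears ``only'' at the Minkowski step on $\ell^{p/q}$ is not justified, and if it were literally the only place, the argument would prove the ball characterization for $q=\infty$ as well, which is false. Until the weight exponent $\alpha$ is chosen and the resulting geometric sums are verified to close under the hypothesis $q\le p$ (and shown to diverge otherwise), this remains a plausible outline rather than a proof.
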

\begin{proof}
The case $1<q\leq p<\infty$ is essentially \cite[Lemma 4.1, Remark 4.2]{PratsSaksman}, with $\Omega$ bounded. The restriction $f\in L^p$ and the boundedness of $\Omega$ are never used along that proof. The same proof can be applied in the case $1 = q \leq p <\infty$  with the usual modifications when the dual exponents are not finite. 
\end{proof}

\renewcommand{\abstractname}{Acknowledgements}
\begin{abstract}
The author was funded by the European Research Council (FP7/2007-2013) under the grant agreement 320501, by 017-SGR-395 (Catalonia) and MTM-2016-77635-P (Spain).

The author wants to thank Kari Astala and Eero Saksman for the discussions that led to the need to show Theorem \ref{theoEquivalent}, Oscar Dom\'inguez for some discussions on the subject and Xavier Tolsa, under whose tuition the author learned most of the techniques developed here. 
\end{abstract}

\bibliography{../../../bibtex/Llibres}

\begin{thebibliography}{Mey78}

\bibitem[Dis03]{Dispa}
Sophie Dispa.
\newblock Intrinsic characterizations of besov spaces on lipschitz domains.
\newblock {\em Math. Nachr.}, 260(1):21--33, 2003.

\bibitem[Dyd06]{Dyda}
Bart{\l}omiej Dyda.
\newblock On comparability of integral forms.
\newblock {\em J. Math. Anal. Appl.}, 318(2):564--577, 2006.

\bibitem[Jon81]{Jones}
Peter~W. Jones.
\newblock Quasiconformal mappings and extendability of functions in {S}obolev
  spaces.
\newblock {\em Acta Math.}, 147(1):71--88, 1981.

\bibitem[Mey78]{Meyers}
Norman~G Meyers.
\newblock Integral inequalities of {P}oincar{\'e} and {W}irtinger type.
\newblock {\em Arch. Ration. Mech. Anal.}, 68(2):113--120, 1978.

\bibitem[PS17]{PratsSaksman}
Mart{\'\i} Prats and Eero Saksman.
\newblock A {T}(1) theorem for fractional {S}obolev spaces on domains.
\newblock {\em J. Geom. Anal.}, 27(3):2490--2538, 2017.

\bibitem[PT15]{PratsTolsa}
Mart{\'\i} Prats and Xavier Tolsa.
\newblock A {T(P)} theorem for {S}obolev spaces on domains.
\newblock {\em J. Funct. Anal.}, 268(10):2946--2989, May 2015.

\bibitem[Ryc99]{Rychkov}
Vyacheslav~S Rychkov.
\newblock On restrictions and extensions of the {B}esov and
  {T}riebel--{L}izorkin spaces with respect to {L}ipschitz domains.
\newblock {\em J. London Math. Soc.}, 60(1):237--257, 1999.

\bibitem[See89]{Seeger}
Andreas Seeger.
\newblock A note on {T}riebel-{L}izorkin spaces.
\newblock {\em Banach Center Publications}, 22(1):391--400, 1989.

\bibitem[Ste70]{SteinPetit}
Elias~M. Stein.
\newblock {\em Singular integrals and differentiability properties of
  functions}, volume~30 of {\em Princeton Mathematical Series}.
\newblock Princeton University Press, 1970.

\bibitem[Str67]{Strichartz}
Robert~S Strichartz.
\newblock Multipliers on fractional {S}obolev spaces.
\newblock {\em J. Math. Mech.}, 16(9):1031--1060, 1967.

\bibitem[Tri83]{TriebelTheory}
Hans Triebel.
\newblock {\em Theory of function spaces}.
\newblock Birkh{\"a}user, reprint (2010) edition, 1983.

\bibitem[Tri06]{TriebelTheoryIII}
Hans Triebel.
\newblock {\em Theory of function spaces III}, volume 100 of {\em Monographs in
  Mathematics}.
\newblock Birkh{\"a}user, 2006.

\bibitem[V{\"a}i88]{Vaisala}
Jussi V{\"a}is{\"a}l{\"a}.
\newblock Uniform domains.
\newblock {\em Tohoku Math. J. (2)}, 40(1):101--118, 1988.

\end{thebibliography}
\end{document}